\newtheorem{proposition}{Proposition}[section]
\newtheorem{lemma}[proposition]{Lemma}
\newtheorem{corollary}[proposition]{Corollary}
\newtheorem{theorem}[proposition]{Theorem}
\theoremstyle{definition}
\newtheorem{definition}[proposition]{Definition}
\newtheorem{example}[proposition]{Example}
\newtheorem{examples}[proposition]{Examples}
\newtheorem{remark}[proposition]{Remark}
\newcommand{\thlabel}[1]{\label{th:#1}}
\newcommand{\thref}[1]{Theorem~\ref{th:#1}}
\newcommand{\selabel}[1]{\label{se:#1}}
\newcommand{\seref}[1]{Section~\ref{se:#1}}
\newcommand{\lelabel}[1]{\label{le:#1}}
\newcommand{\leref}[1]{Lemma~\ref{le:#1}}
\newcommand{\prlabel}[1]{\label{pr:#1}}
\newcommand{\prref}[1]{Proposition~\ref{pr:#1}}
\newcommand{\colabel}[1]{\label{co:#1}}
\newcommand{\coref}[1]{Corollary~\ref{co:#1}}
\newcommand{\relabel}[1]{\label{re:#1}}
\newcommand{\exlabel}[1]{\label{ex:#1}}
\newcommand{\exref}[1]{Example~\ref{ex:#1}}
\newcommand{\delabel}[1]{\label{de:#1}}
\newcommand{\deref}[1]{Definition~\ref{de:#1}}
\newcommand{\eqlabel}[1]{\label{eq:#1}}
\newcommand{\equref}[1]{(\ref{eq:#1})}
\def\ot{\otimes}
\newcommand{\Cc}{\mathcal{C}}
\def\*C{{}^*\hspace*{-1pt}{\Cc}}
\def\text#1{{\rm {\rm #1}}}
\begin{document}

\title[Algebraic constructions for Jacobi-Jordan algebras]
{Algebraic constructions for Jacobi-Jordan algebras}

\author{A. L. Agore}
\address{Simion Stoilow Institute of Mathematics of the Romanian Academy, P.O. Box 1-764, 014700 Bucharest, Romania}
\address{Vrije Universiteit Brussel, Pleinlaan 2, B-1050 Brussels, Belgium}
\email{ana.agore@vub.be and ana.agore@gmail.com}

\author{G. Militaru}
\address{Faculty of Mathematics and Computer Science, University of Bucharest, Str.
Academiei 14, RO-010014 Bucharest 1, Romania}
\address{Simion Stoilow Institute of Mathematics of the Romanian Academy, P.O. Box 1-764, 014700 Bucharest, Romania}
\email{gigel.militaru@fmi.unibuc.ro and gigel.militaru@gmail.com}
\subjclass[2010]{16T10, 16T05, 16S40}

\thanks{This work was supported by a grant of the Ministry of Research,
Innovation and Digitization, CNCS/CCCDI--UEFISCDI, project number
PN-III-P4-ID-PCE-2020-0458, within PNCDI III. The first author is a fellow of FWO Vlaanderen.}

\subjclass[2010]{17A01, 17C10, 17C55} \keywords{Jacobi-Jordan algebras,
unified products, matched pairs, bicrossed products.}

\begin{abstract}
For a given Jacobi-Jordan algebra $A$ and a vector space $V$ over a
field $k$, a non-abelian cohomological type object ${\mathcal
H}^{2}_{A} \, (V, \, A)$ is constructed: it classifies all
Jacobi-Jordan algebras containing $A$ as a subalgebra of codimension
equal to ${\rm dim}_k (V)$. Any such algebra is isomorphic to a so-called
\emph{unified product} $A \, \natural \, V$. Furthermore, we introduce the bicrossed
(semi-direct, crossed, or skew crossed) product $A \bowtie V$
associated to two Jacobi-Jordan algebras as a special
case of the unified product. Several examples and applications
are provided: the Galois group of the extension $A \subseteq A
\bowtie V$ is described as a subgroup of the semidirect product of
groups ${\rm GL}_k (V) \rtimes {\rm Hom}_k (V, \, A)$ and an Artin
type theorem for Jacobi-Jordan algebra is proven. 
\end{abstract}

\maketitle

\section*{Introduction}
In the jungle of non-associative algebras, \emph{Jacobi-Jordan algebras} (JJ algebras for short)
are rather special objects. A JJ algebra is a vector space $A$
together with a bilinear map $[- , \,  -] : A \times A \to A$ such
that $[a, \, b] = [b, \, a]$ and $[a, \, [b, \, c] ] + [b, \, [c,
\, a] ] + [c, \, [a, \, b] ]  = 0$, for all $a$, $b$, $c\in A$.
According to \cite{zu2}, where a detailed history of JJ algebras is given and
several conjectures are proposed, this family
of algebras was first defined in \cite{Zh} and since then they have been studied independently in various papers \cite{BB0, BB, BF, Camacho,
GK, Ok, wa, Buse, Zh2} under different names such as \emph{Lie-Jordan
algebras}, \emph{Jordan algebras of nil index $3$}, \emph{pathological
algebras}, \emph{mock-Lie algebras} or \emph{Jacobi-Jordan algebras}. Throughout, we shall adopt the name
JJ algebras. Although at first sight JJ algebras are very close to Lie algebras having only the
skew-symmetry condition replaced by the symmetry condition, we will see that in fact this class of algebras exhibits rather different properties. Indeed, for instance two classical theorems in Lie algebra theory, namely Ado's theorem and the
Poincar\'{e}--Birkhoff--Witt theorem, fail for JJ algebras
\cite{zu2}. However, JJ algebras have an
interesting and rich structure theory which deserves to be developed further. This is the staring point of this paper which is organised as follows. The first section fixes notations and conventions used
throughout and recalls some basic concepts in the context of
JJ algebras. \seref{unifiedprod} is devoted
to the study of the \emph{extending structures problem}
(ES-problem), introduced in \cite{am-2011} for arbitrary
categories. In the context of  JJ algebras it comes down to the following
question:

\textit{Let $A$ be a JJ algebra and $E$ a vector space
containing $A$ as a subspace. Describe and classify up to an
isomorphism that stabilizes $A$ (i.e. acts as the identity on $A$)
the set of all JJ algebra structures that can be defined on
$E$ such that $A$ becomes a subalgebra of $E$.}

If we fix $V$ a complement of $A$ in the vector space $E$ then the
ES-problem asks for the description and classification of all JJ algebras containing and stabilizing $A$ as a
subalgebra of codimension equal to ${\rm dim} (V)$. Following the
strategy we previously developed in \cite{am-2014, am-2019} the approach we will use for studying the ES-problem
is the following: we start by constructing in \thref{1} the \emph{unified product} $A \, \natural \, V$
associated to a JJ algebra $A$ and a vector space $V$
connected through two \emph{actions} and a \emph{cocycle}. Next we show in \thref{classif} that a JJ algebra structure $(E, \,
[-, \, -]_E )$ on $E$ contains $A$ as a subalgebra if and only if
there exists an isomorphism of JJ algebras $(E, \, [-, \,
-]_E ) \cong A \, \natural \, V$. Finally, the theoretical answer
to the ES-problem is given in \thref{main1}: a \emph{non-abelian
cohomological type object} ${\mathcal H}^{2}_{A} \, (V, \, A )$ is
explicitly constructed; it parameterizes and classifies all
JJ algebras containing and stabilizing $A$ as a subalgebra of
codimension equal to ${\rm dim} (V)$. The unified product is a
general construction containing as special cases the bicrossed
product, semi-direct product, crossed product or skew crossed
product associated to JJ algebras. \seref{cazurispeciale} describes in detail all these special cases, highlighting the role and scope of the subsequent problem associated to each such product. For instance, in \deref{mpmL} we introduce matched pairs of JJ algebras and the corresponding bicrossed product: these are the JJ counterparts of similar
constructions performed for Lie algebras \cite[Theorem 4.1]{majid}.
\coref{bicromlver} proves that the bicrossed product of two
JJ algebras is the object responsible for the
\emph{factorization problem} and is the JJ algebra version of
\cite[Theorem 3.9]{LW}. If $A \bowtie V$ is the bicrossed product
associated to a matched pair $(A, V, \triangleleft, \,
\triangleright)$ of JJ algebras, then the Galois group of
the extension $A \subseteq A \bowtie V$ is explicitly computed
in \coref{grupulgal} as a subgroup of the semidirect product of
groups ${\rm GL}_k (V) \rtimes {\rm Hom}_k (V, \, A)$. The crossed product of two JJ algebras is also a special case of the
unified product: it was introduced and studied in \cite{am-2015}
related to Hilbert's extension problem. Here we highlight a
new application of crossed products as the main characters in our strategy for classifying finite
dimensional supersolvable JJ algebras (\prref{crossdim1}).
Skew crossed products are also introduced and used in
\thref{recsiGal} in order to prove an Artin type theorem for
JJ algebras which gives the reconstruction of a JJ
algebra $A$, on which the finite group $G$ acts, from its
subalgebra of invariants. Computing the classifying object
${\mathcal H}^{2}_{A} \, (V, \, A)$ constructed in \thref{main1},
for a given JJ algebra $A$ and a vector space $V$ is, in general, a very difficult problem. As the starting point in achieving this goal we develop in \seref{exemple} a general strategy for explicitly computing ${\mathcal H}^{2}_{A} \, (k, \, A)$ (\thref{clasdim1}). 

\section{Preliminaries}\selabel{prel}
All vector spaces, linear or bilinear maps are over an arbitrary
field $k$. A bilinear map $f : W\times W \to V$ is called
symmetric if $f (x, \, y) = f (y, \, x)$, for all $x$, $y \in W$.
For a vector space $V$ its dual is denoted by $V^* := {\rm Hom}_k
(V, \, k)$ and ${\rm GL}_k (V) := {\rm Aut}_k (V)$ is the group of
all linear automorphisms of $V$.

A \emph{JJ algebra} is a vector space $A$ together with a
bilinear map $[- , \,  -] : A \times A \to A$, called
multiplication, such that for any $a$, $b$, $c\in A$:
\begin{equation}\eqlabel{jjdef}
[a, \, b]  = [b, \, a], \qquad [a, \, [b, \, c] ] + [b, \, [c, \,
a] ] + [c, \, [a, \, b] ]  = 0
\end{equation}
that is, $[- , \, -]$ is commutative/symmetric and satisfies the
Jacobi identity. Throughout, when describing the multiplication of a certain JJ algebra we will only write down the non-zero products.
The concepts of morphism of JJ algebras,
subalgebra, ideal, derivation etc. are defined in the obvious
way. We denote by ${\rm Aut}_{\rm JJ}(A)$ the automorphism group of the JJ algebra $A$. $\sum_{(c)}$ stands for the circular sum: for
example, $\sum_{(c)} \, [a, \, [b, \, c] ] = [a, \, [b, \, c] ] +
[b, \, [c, \, a] ] + [c, \, [a, \, b] ]$. A JJ algebra $A$
is called abelian if it has trivial multiplication, i.e.
$[a, \,  b] = 0$ for all $a$, $b\in A$. Over a field of
characteristic $\neq 2, 3$, any JJ algebra is a Jordan
algebra \cite{BF} and Jordan algebras of nilpotent index $3$ are
JJ algebras \cite{zu2}. If $A$ is a JJ algebra and $B$ a
commutative associative algebra, then the tensor product $A \ot B$ can be endowed with a JJ algebra structure whose multiplication is given by the formula:
$$[a \ot b, \, a' \ot
b'] : = [a, \, a'] \ot bb'$$
for all $a, a'\in A$ and $b$, $b'\in
B$. Following \cite{zu}, we call this object a \emph{current JJ
algebra}. Several examples of JJ algebras are given in
\cite{am-2015, BB, BF, zu2}. An \emph{antiderivation} of a
JJ algebra $A$ is a linear map $D: A \to A$ such that for
any $a$, $b\in A$:
$$
D ([a, \, b]) = - [D(a), \, b] - [a, \, D(b)]
$$
Unfortunately, unlike Lie algebras, the space of all derivations (resp. antiderivations) of a JJ algebra does not carry a canonical JJ algebra structure.

A \emph{left JJ $A$-module} \cite[Definition 1.4]{am-2015}
is a vector space $V$ equipped with a bilinear map $\triangleright
: A \times V \to V$, called action, such that for any $a$, $b\in
A$ and $x\in V$:
\begin{eqnarray}
[a, \, b] \triangleright x = - a \triangleright (b \triangleright
x) - b \triangleright (a \triangleright x) \eqlabel{JJmodleft}
\end{eqnarray}
We denote by ${}_A{\mathcal M}$ the category of all (left)
JJ $A$-modules with action preserving linear maps as
morphisms. A \emph{right JJ $A$-module} is a vector space
$V$ equipped with a bilinear map $\triangleleft : V \times A \to
V$ such that for any $a$, $b\in A$ and $x\in V$:
\begin{eqnarray}
x \triangleleft [a, \, b]  = - (x \triangleleft a) \triangleleft b
- (x \triangleleft b) \triangleleft a  \eqlabel{JJmodrigt}
\end{eqnarray}
Since $A$ is in particular a commutative algebra there exists an isomorphism of categories
${\mathcal M}_A \cong {}_A{\mathcal M}$. Nevertheless, we shall use
both categories throught the paper. We can easily see that $A$ and
the linear dual $A^* := {\rm Hom}_k (A, k)$ are left JJ
$A$-modules via the canonical actions:
\begin{equation}\eqlabel{canact}
a \triangleright x := [a , \,  x], \qquad (a \triangleright a^*)
(x) := a^* ([a , \,  x])
\end{equation}
for all $a$, $x\in A$ and $a^* \in A^*$.

\section{Extending structures problem}\selabel{unifiedprod}

This section deals with the extending structures problem for
JJ algebras. First we introduce the following:

\begin{definition} \delabel{echivextedn}
Let $A$ be a JJ algebra and $E$ a vector space containing $A$ as a subspace. Two JJ algebra structures on $E$, $\{-,
\, -\}$ and $\{-, \, -\}'$, both containing $A$ as a subalgebra, are called \emph{equivalent}, and we denote this by
$(E, \{-, \, -\}) \equiv (E, \{-, \, -\}')$, if there exists a
JJ algebra isomorphism $\varphi: (E, \{-, \, -\}) \to (E,
\{-, \, -\}')$ which stabilizes $A$, that is $\varphi (a) = a$,
for all $a\in A$. We denote by ${\rm Extd} \, (E, A)$ the set of all equivalence
classes on the set of all JJ algebras structures on $E$
containing $A$ as a subalgebra via the equivalence relation
$\equiv$.
\end{definition}

${\rm Extd} \, (E, A)$ as defined above is the
classifying object of the extending structures problem. We shall
prove that ${\rm Extd} \, (E, A)$ is parameterized by a
\emph{cohomological type object,} denoted by ${\mathcal
H}^{2}_{A} \, (V, \, A)$, which will be explicitly constructed in
this section, where $V$ is a complement of $A$ in $E$, that is $E
= A + V$ and $A \cap V = 0$.

\begin{definition}\delabel{exdatum}
Let $A$ be a JJ algebra and $V$ a vector space. An
\textit{extending datum of $A$ through $V$} is a system $\Omega(A,
\,  V) = \bigl(\triangleleft, \, \triangleright, \, f, \{-, \, -\}
\bigl)$ consisting of four bilinear maps
$$
\triangleleft : V \times A \to V, \quad \triangleright : V \times
A \to A, \quad f: V\times V \to A, \quad \{-, \, -\} : V\times V
\to V.
$$
Let $\Omega(A, \, V) = \bigl(\triangleleft, \, \triangleright, \,
f, \{-, \, -\} \bigl)$ be an extending datum. We denote by $ A \,
\,\natural \,_{\Omega(A, \, V)} V = A \, \,\natural \, V$ the
vector space $A \, \times V$ together with the bilinear map $[ -, \, -] :
(A \times V) \times (A \times V) \to A \times V$ defined by:
\begin{equation}\eqlabel{brackunif}
[(a, x), \, (b, y)] := \bigl( [a, \, b] + x \triangleright b +
y\triangleright a + f(x, y), \,\, \{x, \, y \} + x\triangleleft b
+ y\triangleleft a \bigl)
\end{equation}
for all $a$, $b \in A$ and $x$, $y \in V$. The object $A
\,\natural \, V$ is called the \textit{unified product} of $A$ and
$V$ if it is a JJ algebra with the multiplication given by
\equref{brackunif}. In this case the extending datum $\Omega(A, \,
V) = \bigl(\triangleleft, \, \triangleright, \, f, \{-, \, -\}
\bigl)$ is called a \textit{JJ extending structure} of $A$
through $V$. The maps $\triangleleft$ and $\triangleright$ are
called the \textit{actions} of $\Omega(A, \, V)$ and $f$ is called
the \textit{cocycle} of $\Omega(A, \, V)$.
\end{definition}

Let $\Omega(A, \, V)$ be an extending datum of $A$ through $V$.
Then, the following relations, very useful in computations, hold
in $A \,\natural \, V$:
\begin{eqnarray}
[(a, 0), \, (b, y)] &=& \bigl([a, \, b] + y \triangleright
a, \,\,  y \triangleleft a \bigl) \eqlabel{001}\\
\left[(0, x), \, (b, y)\right] &=& \bigl( x \triangleright b +
f(x, y), \,\, x \triangleleft b + \{x, \, y\} \bigl) \eqlabel{002}
\end{eqnarray}
for all $a$, $b \in A$ and $x$, $y \in V$.

\begin{theorem}\thlabel{1}
Let $\Omega(A, \,  V) = \bigl(\triangleleft, \, \triangleright, \,
f, \{-, \, -\} \bigl)$ be an extending datum of a JJ algebra
$A$ through a vector space $V$. The following statements are
equivalent:

$(1)$ $A \,\natural \, V$ is a unified product;

$(2)$ The following compatibilities hold for any $a$, $b \in A$,
$x$, $y$, $z \in V$:
\begin{enumerate}
\item[(E1)] $f: V\times V \to A$ and $\{-, \, -\} : V\times V
\to V$ are symmetric maps; \\
\item[(E2)] $(V, \, \triangleleft)$ is a right JJ $A$-module;\\
\item[(E3)] $x \triangleright [a, \, b] = - [x \triangleright a,
\, b] - [a, \, x \triangleright b] - (x \triangleleft a)
\triangleright b - (x \triangleleft b) \triangleright a$;\\
\item[(E4)] $\{x,\, y \} \triangleleft a = - \{x,\, y
\triangleleft a\} - \{x \triangleleft a, \, y \} - x \triangleleft
(y \triangleright a)
- y \triangleleft (x \triangleright a)$;\\
\item[(E5)] $\{x,\, y \} \triangleright a =  - x \triangleright (y
\triangleright a) - y \triangleright (x \triangleright a) - [a, \,
f(x, \, y)] - f(x, \, y \triangleleft a) - f(x \triangleleft a, \, y)$;\\
\item[(E6)] $ \sum_{(c)} \, f\bigl(x, \{y,\, z \}\bigl) +
\sum_{(c)} \, x \triangleright f(y, z) = 0$;\\
\item[(E7)] $ \sum_{(c)} \, \{x, \, \{y, \,z\}\} + \sum_{(c)} \, x
\triangleleft f(y, z) = 0$.
\end{enumerate}
\end{theorem}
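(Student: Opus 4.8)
The plan is to unwind what it means for $A\,\natural\,V$ to be a JJ algebra: the bracket \equref{brackunif} must be symmetric and must satisfy the Jacobi identity of \equref{jjdef}. I would first settle symmetry. Written componentwise, $[(a,x),(b,y)]$ has $A$-component $[a,b]+x\triangleright b+y\triangleright a+f(x,y)$ and $V$-component $\{x,\,y\}+x\triangleleft b+y\triangleleft a$; interchanging $(a,x)\leftrightarrow(b,y)$ fixes $[a,b]$ (commutativity in $A$), fixes $x\triangleright b+y\triangleright a$, and fixes $x\triangleleft b+y\triangleleft a$, so the bracket is symmetric precisely when $f(x,y)=f(y,x)$ and $\{x,y\}=\{y,x\}$ for all $x,y\in V$, i.e. when (E1) holds. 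It then remains to prove that, assuming (E1), the Jacobi identity for \equref{brackunif} is equivalent to the conjunction of (E2)--(E7).

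For this, put $J(u,v,w):=\sum_{(c)}\,[u,\,[v,\,w]]$ for $u,v,w\in A\times V$. Since the bracket is $k$-bilinear, $J$ is $k$-trilinear, so $J$ vanishes on all of $(A\times V)^{3}$ if and only if it vanishes on every triple whose entries lie in $(A\times 0)\cup(0\times V)$. Moreover $J(u,v,w)=J(v,w,u)$, so it suffices to test the four cyclic representatives
$$
\bigl((a,0),(b,0),(c,0)\bigr),\quad \bigl((a,0),(b,0),(0,z)\bigr),\quad \bigl((a,0),(0,y),(0,z)\bigr),\quad \bigl((0,x),(0,y),(0,z)\bigr).
$$
Each one is evaluated using the auxiliary identities \equref{001}--\equref{002} (and \equref{brackunif} itself whenever an iterated bracket involves a nonzero $A$-entry together with a nonzero $V$-entry), and is then separated into its $A$-component and its $V$-component.

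I expect the four cases to play out as follows. For $\bigl((a,0),(b,0),(c,0)\bigr)$ one has $[(a,0),(b,0)]=([a,b],0)$ by \equref{001}, so $J$ reduces to the Jacobi identity in $A$ and nothing new is imposed. For $\bigl((a,0),(b,0),(0,z)\bigr)$, the $V$-component of $J$ is $(z\triangleleft a)\triangleleft b+(z\triangleleft b)\triangleleft a+z\triangleleft[a,b]$, whose vanishing is exactly the right-module axiom \equref{JJmodrigt}, namely (E2), while the $A$-component yields (E3). For $\bigl((a,0),(0,y),(0,z)\bigr)$, the $V$-component gives (E4) and the $A$-component gives (E5); here (E1) is used to recognise $\{y,x\triangleleft a\}=\{x\triangleleft a,y\}$ and $f(y,x\triangleleft a)=f(x\triangleleft a,y)$. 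For $\bigl((0,x),(0,y),(0,z)\bigr)$, using $[(0,x),(0,y)]=\bigl(f(x,y),\{x,y\}\bigr)$ and then \equref{002}, the $A$-component of $J$ is $\sum_{(c)}\,x\triangleright f(y,z)+\sum_{(c)}\,f\bigl(x,\{y,z\}\bigr)$, i.e. (E6), and the $V$-component is $\sum_{(c)}\,x\triangleleft f(y,z)+\sum_{(c)}\,\{x,\{y,z\}\}$, i.e. (E7). Reading these four computations in reverse shows conversely that (E1)--(E7) force $J$ to vanish on a spanning set, hence identically, so $A\,\natural\,V$ is a JJ algebra.

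The only genuine difficulty is bookkeeping: in the two mixed cases each iterated bracket expands into roughly a dozen terms, and one must carefully sort the $\triangleright$-, $\triangleleft$-, $f$- and $\{-,-\}$-contributions into their $A$- and $V$-valued parts and then match the resulting cancellation patterns against (E2)--(E5). No conceptual obstacle arises beyond this; the reductions to generators and to cyclic representatives are exactly what keep the case analysis finite and manageable.
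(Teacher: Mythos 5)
Your proposal is correct and follows essentially the same route as the paper: symmetry of the bracket is shown to be equivalent to (E1), and then the Jacobi identity is tested on the generating set $(A\times 0)\cup(0\times V)$, reduced by cyclic invariance to the same four representative triples, whose $A$- and $V$-components yield exactly (E2)--(E7) in the pairing you describe. Nothing to add.
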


\begin{proof}
The proof is based on a rather long and laborious but straightforward computation. We restrict to indicating only the main steps of the proof. First, we can easily prove that the multiplication
defined by \equref{brackunif} is commutative if and only if $f:
V\times V \to A$ and $\{-, \, -\} : V\times V \to V$ are both
symmetric maps, i.e. (E1) holds. From now on we will assume that
(E1) holds. Thus $A \,\natural \, V$ is a JJ algebra if and
only if Jacobi's identity holds, i.e.:
\begin{equation}\eqlabel{005}
\sum_{(c)} \, \bigl[(a, x), \, [(b, y), \, (c, z)]\bigl] = 0
\end{equation}
for all $a$, $b$, $c \in A$ and $x$, $y$, $z \in V$. Since in $A
\,\natural \, V$ we have $(a, x) = (a, 0) + (0, x)$ it follows
that \equref{005} holds if and only if it holds for all generators
of $A \,\natural \, V$, i.e. for the set $\{(a, \, 0) ~|~ a \in A\}
\cup \{(0, \, x) ~|~ x \in V\}$. Since \equref{005} is invariant
under circular permutations we are left with only three cases to
study. First, using \equref{001} we can easily notice that
\equref{005} holds for the triple $(a, 0)$, $(b, 0)$, $(c, 0)$.
Next, we can prove that \equref{005} holds for $(a, 0)$, $(b, 0)$,
$(0, x)$ if and only if (E2) and (E3) hold. Secondly, we can prove
that \equref{005} holds for $(a, 0)$, $(0, x)$, $(0, y)$ if and
only if (E4) and (E5) hold. Finally, \equref{005} holds for $(0,
x)$, $(0, y)$, $(0, z)$ if and only if (E6) and (E7) hold and the
proof is finished.
\end{proof}

For a given JJ algebra $A$ a vector space $V$, we denote by
${\mathcal J}{\mathcal J} (A, \, V)$ the set of all JJ
extending structures of $A$ through $V$, i.e. all systems
$\Omega(A, \, V) = \bigl(\triangleleft, \, \triangleright, \, f,
\{-, \, -\} \bigl)$ satisfying the compatibility conditions
(E1)-(E7) of \thref{1}. Several examples of extending structures
will be given in \seref{cazurispeciale} and \seref{exemple}. Notice that the set ${\mathcal J}{\mathcal J} (A, \, V)$ is
nonempty: it contains the extending structure $\Omega (A, \, V) =
\bigl(\triangleleft, \, \triangleright, \, f, \{-, \, -\} \bigl)$
for which all bilinear maps are trivial. In this case the
associated unified product $A \,\natural \, V = A \times V$, the
direct product between $A$ and the abelian JJ algebra $V$.

Let $\Omega(A, \, V) = \bigl(\triangleleft, \, \triangleright, \,
f, \{-, \, -\} \bigl) \, \in {\mathcal J} {\mathcal J} (A, \, V)$
be a JJ extending structure and $A \,\natural \, V$ the
associated unified product. Then the canonical inclusion
$$
i_{A}: A \to A \,\natural \, V, \qquad i_{A}(a) = (a, \, 0)
$$
is an injective JJ algebra map. Therefore, we can see $A$ as
a JJ subalgebra of $A \,\natural \, V$ through the
identification $A \cong i_{A}(A) \cong A \times \{0\}$.
Conversely, we will prove that any JJ algebra structure on a
vector space $E$ containing $A$ as a JJ subalgebra is
isomorphic to a unified product.

\begin{theorem}\thlabel{classif}
Let $A$ be a JJ algebra, $E$ a vector space containing $A$
as a subspace and $[-, \,-]$ a JJ algebra structure on $E$
such that $A$ is a JJ subalgebra in $(E, [-, \,-])$. Then
there exists a JJ extending structure $\Omega(A, \, V) =
\bigl(\triangleleft, \, \triangleright, \, f, \{-, \, -\} \bigl)$
of $A$ through a subspace $V$ of $E$ and an isomorphism of JJ
algebras $(E, [-, \,-]) \cong A \,\natural \, V$ that stabilizes
$A$.
\end{theorem}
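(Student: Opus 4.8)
The plan is to recover the extending datum from the given bracket on $E$ by means of a linear projection, and then to check that the resulting reconstruction map is an isomorphism of JJ algebras. First I would choose a linear complement $V$ of $A$ in $E$, so that $E = A \oplus V$, and denote by $p : E \to A$ and $q : E \to V$ the associated projections. Using that $A$ is a subalgebra of $(E, [-,-])$, so that $[a, b] \in A$ for all $a, b \in A$, I would define four bilinear maps by $x \triangleright a := p([x,a])$, $x \triangleleft a := q([x,a])$, $f(x,y) := p([x,y])$ and $\{x, y\} := q([x,y])$ for all $a \in A$ and $x, y \in V$, and set $\Omega(A, V) := (\triangleleft, \triangleright, f, \{-,-\})$.

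Next I would introduce the map
\[
\varphi : A \,\natural_{\Omega(A,V)}\, V \to E, \qquad \varphi(a, x) := a + x .
\]
It is a linear isomorphism, with inverse $e \mapsto \bigl(p(e), q(e)\bigr)$, and it stabilizes $A$ since $\varphi(a, 0) = a$ for all $a \in A$. The one computation that has to be carried out is that $\varphi$ intertwines the bilinear map \equref{brackunif} on $A \times V$ with the bracket of $E$: expanding $[a + x, \, b + y]$ by bilinearity and using the commutativity of $[-,-]$ to rewrite the mixed terms as $[a, y] = [y, a] = (y \triangleright a) + (y \triangleleft a)$, $[x, b] = (x \triangleright b) + (x \triangleleft b)$ and $[x, y] = f(x, y) + \{x, y\}$, together with $[a,b] \in A$, one obtains precisely $\varphi\bigl([(a, x), \, (b, y)]\bigr)$ with the right-hand side given by \equref{brackunif}.

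Once this identity is established, the conclusion is immediate. Since $\varphi$ is a linear bijection carrying the bilinear operation \equref{brackunif} onto the JJ bracket of $E$, transport of structure shows that $A \times V$ endowed with \equref{brackunif} is itself a JJ algebra; hence, by \thref{1}, the datum $\Omega(A, V)$ satisfies the compatibility conditions (E1)--(E7), i.e. it is a JJ extending structure and $A \,\natural\, V$ is a unified product. The map $\varphi$ is then an isomorphism of JJ algebras which stabilizes $A$, as required.

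I do not expect a genuine obstacle here; the statement is essentially a bookkeeping lemma. The only point that needs attention is the handling of the mixed brackets $[a, y]$ with $a \in A$ and $y \in V$: formula \equref{brackunif} records the actions only with the $V$-argument on the left, so one must invoke the symmetry $[a, y] = [y, a]$ to express everything through $\triangleright$ and $\triangleleft$ --- this is exactly what dictates the definitions of the four maps above. A useful simplification is that one never has to verify (E1)--(E7) directly: they follow for free from \thref{1} via the transport of structure along $\varphi$.
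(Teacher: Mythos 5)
Your proposal is correct and follows essentially the same route as the paper: define the four structure maps via the projection onto $A$ along a complement $V$ (the paper takes a retraction $p$ and sets $V := \Ker p$, which is the same thing), verify that $\varphi(a,x) = a + x$ intertwines the bracket \equref{brackunif} with the bracket of $E$, and conclude by transport of structure together with \thref{1} that $\Omega(A,V)$ is a JJ extending structure. The observation that (E1)--(E7) need not be checked directly is exactly the strategy the paper uses.
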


\begin{proof} Since we work over a field $k$, there exists a linear map $p: E \to
A$ such that $p(a) = a$, for all $a \in A$. Then $V :=
\rm{ker}(p)$ is a subspace of $E$ and a complement of $A$ in $E$.
We can now define the extending datum of $A$ through $V$ as follows:
\begin{eqnarray*}
\triangleright = \triangleright_p : V \times A \to
A, \qquad x \triangleright a &:=& p \bigl([x, \, a]\bigl)\\
\triangleleft = \triangleleft_p: V \times A \to V,
\qquad x \triangleleft a &:=& [x, \, a] - p \bigl([x, \, a]\bigl)\\
f = f_p: V \times V \to A, \qquad f(x, y) &:=&
p \bigl([x, \, y]\bigl)\\
\{\, , \, \} = \{\, , \, \}_p: V \times V \to V, \qquad \{x, y\}
&:=& [x, \, y] - p \bigl([x, \, y]\bigl)
\end{eqnarray*}
for any $a \in A$ and $x$, $y\in V$. First of all, it is straightforward to see that
the above maps are well defined bilinear maps: $x
\triangleleft a \in V$ and $\{x, \, y \} \in V$, for all $x$, $y
\in V$ and $a \in A$. We will show that $\Omega(A, \, V) =
\bigl(\triangleleft, \, \triangleright, \, f, \{-, \, -\} \bigl)$
is a JJ extending structure of $A$ through $V$ and $ \varphi:
A \, \natural \, V \to E$, $\varphi(a, x) := a + x$ is an
isomorphism of JJ algebras that stabilizes $A$. The strategy we use, relaying on \thref{1}, is the following: $\varphi: A \times V \to E$,
$\varphi(a, \, x) := a + x$ is a linear isomorphism between the
JJ algebra $E$ and the direct product of vector spaces $A
\times V$ with the inverse given by $\varphi^{-1}(y) :=
\bigl(p(y), \, y - p(y)\bigl)$, for all $y \in E$. Thus, there
exists a unique JJ algebra structure on $A \times V$ such
that $\varphi$ is an isomorphism of JJ algebras and this
unique multiplication on $A \times V$ is given for any $a$, $b \in
A$ and $x$, $y\in V$ by:
$$
[(a, x), \, (b, y)] := \varphi^{-1} \bigl([\varphi(a, x), \,
\varphi(b, y)]\bigl)
$$
We are now left to prove that the above
multiplication coincides with the one associated to the system
$\bigl(\triangleleft_p, \, \triangleright_p, \, f_p, \{-, \, -\}_p
\bigl)$ as defined by
\equref{brackunif} . Indeed, for any $a$, $b \in A$ and $x$, $y\in V$ we have:
\begin{eqnarray*}
[(a, x), \, (b, y)] &=& \varphi^{-1} \bigl([\varphi(a, x), \,
\varphi(b, y)]\bigl)
= \varphi^{-1} \bigl([a, \, b] + [a, \, y] + [x, \, b] + [x, \, y]\bigl)\\
&=& \bigl(p([a, \, b]), [a, \, b] - p([a, \, b])\bigl) +
\bigl(p([a, \, y]), [a, \, y] - p([a, \, y])\bigl)\\
&& + \bigl(p([x, \, b]), [x, \, b] - p([x, \, b])\bigl) +
\bigl(p([x, \, y]), [x, \, y] - p([x, \, y])\bigl)\\
&=& \Bigl(p([a, \, b]) + p([a, \, y]) + p([x, \, b]) +
p([x, \, y]), \ [a, \, b] + [a, \, y]\\
&&+ [x, \, b] + [x, \, y] - p([a, \, b]) - p([a, \, y])
- p([x, \, b]) - p([x, \, y])\Bigl)\\
&=& \Bigl([a, \, b] + p([a, \, y]) + p([x, \, b]) +
p([x, \, y]), \,\, [a, \, y] - p([a, \, y]) + \\
&& + [x, \, b] - p([x, \, b])  + [x, \, y] - p([x, \, y])\Bigl)\\
&=& \Bigl([a, \, b] + y \triangleright a + x \triangleright b +
f(x, \, y), \, \{x, \, y\} + x \triangleleft b + y \triangleleft
a\Bigl)
\end{eqnarray*}
as desired. Note that the commutativity of $[-, \, -]$ was intensively used
in the above computations. Moreover, the following diagram
\begin{eqnarray*}
\xymatrix {& A \ar[r]^{i_{A}} \ar[d]_{Id} &
{A \,\natural \, V} \ar[d]^{\varphi} \\
& A \ar[r]^{i} & {E} }
\end{eqnarray*}
is obviously commutative which shows that $\varphi$ stabilizes $A$ and this finishes the
proof.
\end{proof}

Using \thref{classif}, the classification of all JJ algebra
structures on $E$ that contain $A$ as a subalgebra, reduces to
the classification of all unified products $A \,\natural \, V$,
associated to all JJ extending structures $\Omega(A, \, V) =
\bigl(\triangleleft, \, \triangleright, \, f, \{-, \, -\} \bigl)$,
for a given complement $V$ of $A$ in $E$. In order to construct a
cohomological type object ${\mathcal H}^{2}_{A} \, (V, \, A)$
which will parameterize the classifying sets ${\rm Extd} \, (E,
A)$ defined in \deref{echivextedn}, we introduce the following:

\begin{lemma} \lelabel{morfismuni}
Let $\Omega(A, \, V) = \bigl(\triangleleft, \, \triangleright, \,
f, \{-, \, -\} \bigl)$ and $\Omega'(A, \, V) = \bigl(\triangleleft
', \, \triangleright ', \, f', \{-, \, -\}' \bigl)$ be two
JJ algebra extending structures of $A$ through $V$ and $ A
\,\natural \, V$, respectively $ A \,\natural \, ' V$, the associated unified
products. Then there exists a bijection between the set of all
morphisms of JJ algebras $\psi: A \,\natural \, V \to A
\,\natural \, ' V$ which stabilize $A$ and the set of pairs $(r,
v)$, where $r: V \to A$, $v: V \to V$ are two linear maps
satisfying the following compatibility conditions for any $a \in
A$, $x$, $y \in V$:
\begin{enumerate}
\item[(M1)] $v(x) \triangleleft ' a = v(x \triangleleft a)$, i.e.
$v$ is a morphism of right JJ $A$-modules; \item[(M2)] $
v(x) \triangleright ' a = r(x \triangleleft a) + x \triangleright
a - [a, \, r(x)]$; \item[(M3)] $v(\{x, y\}) = \{v(x), v(y)\}' +
v(x) \triangleleft ' r(y) + v(y) \triangleleft ' r(x)$;
\item[(M4)] $r(\{x, y\}) = [r(x), \, r(y)] + v(x) \triangleright '
r(y) + v(y) \triangleright ' r(x) + f' \bigl(v(x), v(y)\bigl) -
f(x, y)$
\end{enumerate}
Under the above bijection the morphism of JJ algebras $\psi
= \psi_{(r, v)}: A \,\natural \, V \to A \,\natural \, ' V$
corresponding to $(r, v)$ is given for any $a \in A$ and $x \in V$
by:
$$
\psi(a, \, x) = (a + r(x), \, v(x))
$$
Moreover, $\psi = \psi_{(r, v)}$ is an isomorphism if and only if
$v: V \to V$ is bijective.
\end{lemma}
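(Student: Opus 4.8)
The strategy is the same two-stage reduction used for \thref{1}: first describe, purely as linear maps, those $\psi$ that stabilise $A$, and then translate multiplicativity of $\psi$ into identities by evaluating it on a generating set.

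First I would record that, since $A \,\natural\, V = A \times V$ as a vector space and a stabilising map must satisfy $\psi(a,0)=(a,0)$ for all $a\in A$, the restriction of $\psi$ to $\{0\}\times V$ determines it completely: writing $\psi(0,x)=(r(x),v(x))$ for linear maps $r\colon V\to A$ and $v\colon V\to V$, linearity forces $\psi(a,x)=(a+r(x),v(x))$. Conversely, every pair $(r,v)$ of linear maps produces a linear map $\psi_{(r,v)}$ of this form, and $\psi_{(r,v)}(a,0)=(a,0)$, so it stabilises $A$. This already gives the asserted bijection on the level of linear maps; what remains is to single out the pairs $(r,v)$ for which $\psi_{(r,v)}$ is multiplicative.

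Next, because the bracket of a unified product is bilinear and $\psi$ is linear, the identity $\psi\bigl([u,w]\bigr)=[\psi(u),\psi(w)]'$ holds for all $u,w$ as soon as it holds when $u,w$ run over the spanning set $\{(a,0)\mid a\in A\}\cup\{(0,x)\mid x\in V\}$; and since the multiplication is symmetric, only three cases survive. For the pair $(a,0),(b,0)$ both sides equal $([a,b],0)$ and nothing is required. For $(a,0),(0,y)$ I would compute the left-hand side via \equref{001} and the right-hand side via \equref{001} applied to the primed product, and compare the $A$- and $V$-components: this yields exactly (M2) and (M1). For $(0,x),(0,y)$ I would compute both sides via \equref{002} and \equref{brackunif}, and comparing components yields exactly (M4) and (M3). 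Hence $\psi_{(r,v)}$ is a morphism of JJ algebras stabilising $A$ if and only if $(r,v)$ satisfies (M1)--(M4), which is the claimed bijection.

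Finally, for the isomorphism assertion I would observe that $\psi_{(r,v)}$ is, relative to the decomposition $A\times V$, ``block upper triangular'' with the identity on the $A$-block: its kernel is $\{(-r(x),x)\mid v(x)=0\}$ and it is surjective precisely when $v$ is, so $\psi_{(r,v)}$ is bijective if and only if $v$ is bijective. When $v$ is invertible one checks directly that $(a,x)\mapsto\bigl(a-r(v^{-1}(x)),\,v^{-1}(x)\bigr)$ inverts $\psi_{(r,v)}$; a bijective morphism of JJ algebras is an isomorphism, which finishes the proof. The only genuine labour is the case analysis of the second step --- expanding $\psi\bigl([u,w]\bigr)$ and $[\psi(u),\psi(w)]'$ on generators and matching components --- which is routine but bookkeeping-heavy and entirely parallel to the computation behind \thref{1}; the one point requiring care is to keep the primed and unprimed structures (and the two copies of $V$) rigorously separate and to invoke symmetry so as not to redo redundant cases.
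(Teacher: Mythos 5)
Your proposal is correct and follows essentially the same route as the paper: determine the linear form $\psi(a,x)=(a+r(x),v(x))$ from the stabilisation condition, check multiplicativity on the generating set $\{(a,0)\}\cup\{(0,x)\}$ to extract (M1)--(M4), and exhibit the explicit inverse $(a,x)\mapsto(a-r(v^{-1}(x)),v^{-1}(x))$ when $v$ is bijective. Your kernel/surjectivity observation even makes the ``only if'' direction of the final claim slightly more explicit than the paper does.
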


\begin{proof}
A linear map $\psi: A \,\natural \, V \to A \,\natural \, ' V$
which makes the following diagram commutative:
$$
\xymatrix {& {A} \ar[r]^{i_{A}} \ar[d]_{Id_{A}}
& {A \,\natural \, V}\ar[d]^{\psi}\\
& {A} \ar[r]^{i_{A}} & {A \,\natural \, ' V}}
$$
is uniquely determined by two linear maps $r: V \to A$, $v: V \to
V$ such that $\psi(a, x) = (a + r(x), v(x))$, for all $a \in A$,
and $x \in V$. Indeed, if we denote $\psi(0, x) = (r(x), v(x)) \in
A \times V$ for all $x \in V$, we obtain:
\begin{eqnarray*}
\psi(a, \, x) &=& \psi \bigl((a, 0) + \psi(0, x)\bigl) = \psi(a, 0) + \psi(0, x)\\
&=&(a, 0) + \bigl(r(x), v(x)\bigl) = \bigl(a + r(x), v(x) \bigl)
\end{eqnarray*}
Let $\psi = \psi_{(r, v)}$ be such a linear map, i.e. $\psi(a, x)
= (a + r(x), v(x))$, for some linear maps $r: V \to A$, $v: V \to
V$. We will prove that $\psi$ is a morphism of JJ algebras
if and only if the compatibility conditions (M1)-(M4) hold. To this end, it is
enough to prove that the compatibility
\begin{equation}\eqlabel{Liemap}
\psi \bigl([(a, x), \, (b, y)] \bigl) = [\psi(a, x), \, \psi(b,
y)]
\end{equation}
holds on all generators of $A \,\natural \, V$. Again, we skip the detailed computations and indicate only the key steps of the process. First, it is easy to see that
\equref{Liemap} holds for the pair $(a, 0)$, $(b, 0)$, for all
$a$, $b \in A$. Secondly, we can prove that \equref{Liemap} holds
for the pair $(a, 0)$, $(0, x)$ if and only if (M1) and (M2) hold.
Finally, \equref{Liemap} holds for the pair $(0,
x)$, $(0, y)$ if and only if (M3) and (M4) hold. The last
statement follows immediately by noticing that if $v: V \to
V$ is bijective, then $\psi_{(r, v)}$ is an isomorphism of
JJ algebras with the inverse given for any $b \in A$ and $y
\in V$ by:
$$
\psi_{(r, v)}^{-1}(b, \, y) = \bigl(b - r(v^{-1}(y)), \,
v^{-1}(y)\bigl)
$$
The proof is now finished.
\end{proof}

For classification purposes we introduce the following:

\begin{definition}\delabel{echiaa}
Let $A$ be a JJ algebra and $V$ a vector space. Two
JJ algebra extending structures of $A$ by $V$, $\Omega(A, \,
V) = \bigl(\triangleleft, \, \triangleright, \, f, \{-, \, -\}
\bigl)$ and $\Omega'(A, \, V) = \bigl(\triangleleft ', \,
\triangleright ', \, f', \{-, \, -\}' \bigl)$ are called
\emph{equivalent}, and we denote this by $\Omega(A, \, V) \equiv
\Omega'(A, \, V)$, if there exists a pair of linear maps $(r, v)$,
where $r: V \to A$ and $v \in {\rm Aut}_{k}(V)$ such that
$\bigl(\triangleleft ', \, \triangleright ', \, f', \{-, \, -\}'
\bigl)$ is defined via $\bigl(\triangleleft, \,
\triangleright, \, f, \{-, \, -\} \bigl)$ using $(r, v)$ as follows:
\begin{eqnarray*}
x \triangleleft ' a &=& v \bigl(v^{-1}(x) \triangleleft a\bigl)\\
x \triangleright ' a &=& r \bigl(v^{-1}(x) \triangleleft a\bigl) +
v^{-1}(x) \triangleright a - [a, \, r\bigl(v^{-1}(x)\bigl)]\\
f'(x, y) &=& f \bigl(v^{-1}(x), v^{-1}(y)\bigl) + r \bigl(\{v^{-1}(x),
v^{-1}(y)\}\bigl) + [r\bigl(v^{-1}(x)), \, r\bigl(v^{-1}(y)\bigl)]\\
&& - \, r\bigl(v^{-1}(x) \triangleleft r
\bigl(v^{-1}(y)\bigl)\bigl) - v^{-1}(x) \triangleright r
\bigl(v^{-1}(y)\bigl)
-  r\bigl(v^{-1}(y) \triangleleft r \bigl(v^{-1}(x)\bigl)\bigl)\\
&&  - \, v^{-1}(y) \triangleright r \bigl(v^{-1}(x)\bigl) \\
\{x, y\}' &=& v \bigl(\{v^{-1}(x), v^{-1}(y)\}\bigl) - \, v
\bigl(v^{-1}(x) \triangleleft r \bigl(v^{-1}(y)\bigl)\bigl)
 - \, v \bigl(v^{-1}(y) \triangleleft r \bigl(v^{-1}(x)\bigl)\bigl)
\end{eqnarray*}
for all $a \in A$, $x$, $y \in V$.
\end{definition}

We summarize the results of this section in the following result which provides the answer to the extending
structures problem for JJ algebras:

\begin{theorem}\thlabel{main1}
Let $A$ be a JJ algebra, $E$ a vector space that contains
$A$ as a subspace and $V$ a complement of $A$ in $E$. Then:

$(1)$ $\equiv$ is an equivalence relation on the set $ {\mathcal
J} {\mathcal J} (A, \,  V)$ of all JJ extending structures
of $A$ through $V$. We denote by ${\mathcal H}^{2}_{A} \, (V, \, A
) := {\mathcal J} {\mathcal J} (A, \, V)/ \equiv $, the quotient
set.

$(2)$ The map
$$
{\mathcal H}^{2}_{A} \, (V, \, A) \to {\rm Extd} \, (E, \, A),
\qquad \overline{(\triangleleft, \triangleright, f, \{-, \, -\})}
\rightarrow \bigl(\mathfrak{g} \,\natural \, V, \, [ -  , \, - ]
\bigl)
$$
is bijective, where $\overline{(\triangleleft, \triangleright, f,
\{-, \, -\})}$ is the equivalence class of $(\triangleleft,
\triangleright, f, \{-, \, -\})$ via $\equiv$.
\end{theorem}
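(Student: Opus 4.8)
The strategy is to reduce everything to \leref{morfismuni}. The key observation is that the four identities listed in \deref{echiaa} are nothing but the compatibility conditions (M1)--(M4) of \leref{morfismuni} rewritten so as to express the primed structure in terms of the unprimed one: substituting $x \mapsto v^{-1}(x)$, $y \mapsto v^{-1}(y)$ in (M1)--(M4) and solving successively for $x \triangleleft ' a$, $x \triangleright ' a$, $\{x,\,y\}'$ and $f'(x,\,y)$ yields exactly the displayed formulas. Hence, for $\Omega(A,\,V),\ \Omega'(A,\,V) \in {\mathcal J}{\mathcal J}(A,\,V)$ one has $\Omega(A,\,V) \equiv \Omega'(A,\,V)$ if and only if there is a pair $(r,\,v)$ with $r : V \to A$ linear and $v \in {\rm Aut}_k(V)$ satisfying (M1)--(M4), i.e., by \leref{morfismuni}, if and only if there exists an isomorphism of JJ algebras $\psi : A \,\natural\, V \to A \,\natural\, ' V$ that stabilizes $A$. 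One preliminary point must be settled for \deref{echiaa} to be meaningful, namely that feeding $\Omega(A,\,V) \in {\mathcal J}{\mathcal J}(A,\,V)$ and an arbitrary such pair $(r,\,v)$ into those formulas again produces an element of ${\mathcal J}{\mathcal J}(A,\,V)$. This follows by transport of structure: the linear bijection $\phi : A \times V \to A \times V$, $\phi(a,\,x) = (a + r(x),\, v(x))$, carries the JJ algebra $A \,\natural\, V$ to a JJ algebra structure on $A \times V$, and a direct expansion of $\phi\bigl([\phi^{-1}(a,\,x),\,\phi^{-1}(b,\,y)]\bigr)$ using \equref{brackunif} shows that this transported bracket is precisely the bracket \equref{brackunif} attached to the system $\Omega'(A,\,V)$ of \deref{echiaa}; consequently $\Omega'(A,\,V)$ satisfies (E1)--(E7) by \thref{1}, and the map $\psi_{(r,v)}$ of \leref{morfismuni} is the isomorphism $\phi$.

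Granting this, part $(1)$ is immediate from the reformulation above. Reflexivity corresponds to the pair $(0,\,{\rm Id}_V)$, for which \deref{echiaa} returns $\Omega'(A,\,V) = \Omega(A,\,V)$. For symmetry, if $\psi_{(r,v)} : A \,\natural\, V \to A \,\natural\, ' V$ is an isomorphism of JJ algebras stabilizing $A$, then so is its inverse, which by \leref{morfismuni} equals $\psi_{(r',v')}$ with $v' = v^{-1} \in {\rm Aut}_k(V)$; hence $\Omega'(A,\,V) \equiv \Omega(A,\,V)$. For transitivity, if $\psi_{(r,v)} : A \,\natural\, V \to A \,\natural\, ' V$ and $\psi_{(r',v')} : A \,\natural\, ' V \to A \,\natural\, '' V$ are isomorphisms of JJ algebras stabilizing $A$, then their composite sends $(a,\,x)$ to $\bigl(a + r(x) + r'(v(x)),\, v'(v(x))\bigl)$, so it is of the form $\psi_{(r'',v'')}$ with $v'' = v' \circ v \in {\rm Aut}_k(V)$ and is again an isomorphism of JJ algebras stabilizing $A$; thus $\Omega(A,\,V) \equiv \Omega''(A,\,V)$.

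For part $(2)$, fix the linear identification $E \cong A \times V$ given by $a + x \mapsto (a,\,x)$ (which makes sense because $E = A + V$ and $A \cap V = 0$); under it $A$ corresponds to $A \times \{0\}$ and, since the inclusion $i_A : A \to A \,\natural\, V$ is a morphism of JJ algebras, every unified product $A \,\natural\, V$ transports to a JJ algebra structure on $E$ having $A$ as a subalgebra. Thus the rule of the statement assigns to a representative $\Omega(A,\,V)$ a well-defined element of ${\rm Extd}(E,\,A)$, and it descends to $\equiv$-classes by the first paragraph: an equivalence $\Omega(A,\,V) \equiv \Omega'(A,\,V)$ supplies an isomorphism $A \,\natural\, V \cong A \,\natural\, ' V$ stabilizing $A$, which transported to $E$ witnesses that the two structures are equivalent in the sense of \deref{echivextedn}. \emph{Surjectivity}: given a JJ algebra structure $[-,\,-]$ on $E$ containing $A$ as a subalgebra, take for $p$ in \thref{classif} the projection of $E = A \oplus V$ onto $A$ along $V$ (so $p|_A = {\rm Id}_A$ and $\ker p = V$); \thref{classif} then produces $\Omega(A,\,V) \in {\mathcal J}{\mathcal J}(A,\,V)$ together with an isomorphism of JJ algebras $A \,\natural\, V \cong (E,\,[-,\,-])$ stabilizing $A$ — indeed, after the above identification this isomorphism is the identity map — so $[-,\,-]$ represents the image of $\overline{\Omega(A,\,V)}$. \emph{Injectivity}: if the structures induced on $E$ by $A \,\natural\, V$ and $A \,\natural\, ' V$ are equivalent in ${\rm Extd}(E,\,A)$, there is an isomorphism of JJ algebras $E \to E$ stabilizing $A$; transporting it back along $E \cong A \times V$ gives an isomorphism of JJ algebras $A \,\natural\, V \to A \,\natural\, ' V$ stabilizing $A$, which by \leref{morfismuni} has the form $\psi_{(r,v)}$ for some $r : V \to A$ and $v \in {\rm Aut}_k(V)$ satisfying (M1)--(M4); as noted, this means exactly that the formulas of \deref{echiaa} hold, i.e., $\Omega(A,\,V) \equiv \Omega'(A,\,V)$.

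All the conceptual input sits in \thref{classif} and \leref{morfismuni}; what is left is bookkeeping. The one place requiring a little care is the well-posedness of $\equiv$ — checking that the rather intricate formulas of \deref{echiaa} do output a JJ extending structure — which is why I would dispose of it via the transport-of-structure argument of the first paragraph rather than by verifying (E1)--(E7) by hand; beyond that, one only needs to keep track of the fact that the property of stabilizing $A$ is preserved when passing between $E$ and $A \times V$, and this is built into the chosen identification.
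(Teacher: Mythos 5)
Your proposal is correct and follows essentially the same route as the paper: the paper's proof is precisely the one-line observation that $\Omega(A,V)\equiv\Omega'(A,V)$ in the sense of \deref{echiaa} if and only if there is a JJ algebra isomorphism $A\,\natural\,V\to A\,\natural\,'V$ stabilizing $A$ (via \leref{morfismuni}), combined with \thref{1} and \thref{classif}. You simply fill in the details the authors omit — the substitution $x\mapsto v^{-1}(x)$ identifying the formulas of \deref{echiaa} with (M1)--(M4), the verification of the equivalence-relation axioms by composing stabilizing isomorphisms, and the surjectivity/injectivity bookkeeping — all of which are accurate.
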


\begin{proof} The proof follows from \thref{1},
\thref{classif} and \leref{morfismuni} once we observe that
$\Omega(A, \,  V) \equiv \Omega'(A, \, V)$ in the sense of
\deref{echiaa} if and only if there exists an isomorphism of
JJ algebras $\psi: A \,\natural \, V \to A \,\natural \, '
V$ which stabilizes $A$. Therefore, $\equiv$ is an equivalence
relation on the set ${\mathcal J} {\mathcal J} (A, \, V)$ of all
JJ algebra extending structures $\Omega(A, \, V)$ and the
conclusion follows from \thref{classif} and \leref{morfismuni}.
\end{proof}

\section{Special cases of unified products. Applications}\selabel{cazurispeciale}
In this section we consider the most important special cases of unified products of JJ algebras, namely bicrossed/semidirect/crossed/skew
crossed products, and we will provide applications for each of these
products. We consider the following convention: if one of the maps
$\triangleleft$, $\triangleright$, $f$ or $\{-, \, -\}$ of an
extending datum $\Omega(A, \, V) = \bigl(\triangleleft, \,
\triangleright, \, f, \{-, \, -\} \bigl)$ is trivial then we will
omit it from the quadruple $\bigl(\triangleleft, \,
\triangleright, \, f, \{-, \, -\} \bigl)$.

\subsection*{Matched pairs and bicrossed products}
Let $\Omega(A, \, V)=\bigl(\triangleleft, \triangleright, f, \{-,
\, -\}\bigl)$ be an extending datum of the JJ algebra $A$
through a vector space $V$ such that $f$ is the trivial map, i.e.
$f(x, y) = 0$ for all $x$, $y\in V$. Then, using \thref{1} we
obtain that $\Omega(A, \, V) = \bigl(\triangleleft, \,
\triangleright, \, \{-, \, -\} \bigl)$ is a JJ extending
structure of $A$ through $V$ if and only if $(V, \, \{-, -\})$ is
a JJ algebra and the following compatibilities hold for all
$a$, $b \in A$, $x$, $y \in V$:
\begin{enumerate}
\item[(1)] $(V, \, \triangleleft)$ is a right JJ $A$-module,
i.e. $x \triangleleft [a, \, b]  = - (x \triangleleft
a) \triangleleft b - (x \triangleleft b) \triangleleft a$ ;\\
\item[(2)] $(A, \, \triangleright)$ is a left JJ $V$-module,
i.e. $\{x,\, y \} \triangleright a =  - x \triangleright (y
\triangleright a) - y \triangleright (x
\triangleright a)$;\\
\item[(3)] $x \triangleright [a, \, b] = - [x \triangleright a, \,
b] - [a, \, x \triangleright b] - (x \triangleleft a)
\triangleright b - (x \triangleleft b) \triangleright a$;\\
\item[(4)] $\{x,\, y \} \triangleleft a = - \{x,\, y \triangleleft
a\} - \{x \triangleleft a, \, y \} - x \triangleleft (y
\triangleright a) - y \triangleleft (x \triangleright a)$.
\end{enumerate}

Following \cite[Theorem 4.1]{majid} we introduce the following
concept:

\begin{definition}\delabel{mpmL}
Let $A = (A, [- , -])$ and $V = (V, \, \{-, -\})$ be two JJ
algebras. Then $(A, V, \triangleleft, \, \triangleright)$ is
called a \emph{matched pair} of JJ algebras if $(V, \,
\triangleleft)$ is a right JJ $A$-module, $(A, \,
\triangleright)$ is a left JJ $V$-module and the following
compatibilities hold for all $a$, $b \in A$, $x$, $y \in V$:
\begin{enumerate}
\item[(MP1)] $x \triangleright [a, \, b] = - [x \triangleright a,
\, b] - [a, \, x \triangleright b] - (x \triangleleft a)
\triangleright b - (x \triangleleft b) \triangleright a$;\\
\item[(MP2)] $\{x,\, y \} \triangleleft a = - \{x,\, y
\triangleleft a\} - \{x \triangleleft a, \, y \} - x \triangleleft
(y \triangleright a) - y \triangleleft (x \triangleright a)$.
\end{enumerate}
\end{definition}

If $(A, V, \triangleleft, \, \triangleright)$ is a matched pair of
JJ algebras then the associated unified product $A
\,\natural \,_{\Omega(A, V)} V$ will be denoted by $A \bowtie V $
and will be called the \emph{bicrossed product} of the matched
pair $(A, V, \triangleleft, \, \triangleright)$. Thus, $A \bowtie
V = A \times V$ as a vector space with multiplication given
by:
\begin{equation}\eqlabel{bracbicross}
[(a, x), \, (b, y)] := \bigl( [a, \, b] + x \triangleright b +
y\triangleright a , \,\, \{x, \, y \} + x\triangleleft b +
y\triangleleft a \bigl)
\end{equation}
for all $a$, $b \in A$ and $x$, $y \in V$.

\begin{example} \exlabel{semidirect}
Let $(A, V, \triangleleft, \, \triangleright)$ be a matched pair
of JJ algebras such that $\triangleleft$ is the trivial map.
Then the associated bicrossed product $A \bowtie V$ will be denoted by
$A \ltimes V$ and was first introduced in \cite{am-2015} under the name of
\emph{semidirect product}. Explicitly, the semidirect
product $A \ltimes V$ is associated to a left JJ $V$-module
structure $(A, \, \triangleright)$ such that for any $a$, $b\in A$
and $x\in V$:
$$
x \triangleright [a, \, b] = - [x \triangleright a, \, b] - [a, \,
x \triangleright b]
$$
or equivalently the map $ x \triangleright - : A \to A$ is an
antiderivation of $A$, for all $x\in V$.
\end{example}

The bicrossed product of two JJ algebras is the construction
responsible for the so-called \emph{factorization problem}:

\emph{Let $A$ and $V$ be two given JJ algebras. Describe and
classify all JJ algebras $E$ that factorize through $A$ and
$V$, i.e. $E$ contains $A$ and $V$ as JJ subalgebras such
that $E = A + V$ and $A \cap V = \{0\}$.}

Indeed, using \thref{classif} we can prove the JJ algebra version of
\cite[Theorem 3.9]{LW}:

\begin{corollary}\colabel{bicromlver}
A JJ algebra $E$ factorizes through two given JJ
algebras $A$ and $V$ if and only if there exists a matched pair of
JJ algebras $(A, V, \triangleleft, \, \triangleright)$ such
that $E \cong A \bowtie V $.
\end{corollary}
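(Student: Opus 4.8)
The plan is to prove both implications of \coref{bicromlver} directly from the general machinery already established.

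For the ``if'' direction, suppose $(A, V, \triangleleft, \triangleright)$ is a matched pair of JJ algebras. Then the list of conditions (1)--(4) preceding \deref{mpmL} is exactly the content of $\Omega(A,V) = (\triangleleft, \triangleright, \{-,-\})$ being a JJ extending structure with trivial cocycle $f$, so by \thref{1} the bicrossed product $A \bowtie V = A \,\natural\, V$ is a JJ algebra with multiplication \equref{bracbicross}. By the discussion following \thref{1}, the canonical inclusion $i_A : A \to A \bowtie V$, $a \mapsto (a,0)$, is an injective JJ algebra map, so $A$ (identified with $A \times \{0\}$) is a subalgebra. Symmetrically, one checks that $i_V : V \to A \bowtie V$, $x \mapsto (0,x)$, is an injective JJ algebra map: indeed from \equref{bracbicross}, $[(0,x),(0,y)] = (0, \{x,y\})$ since $f$ is trivial, so $i_V$ respects the bracket because $(V, \{-,-\})$ is a JJ algebra. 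Clearly $A \times \{0\}$ and $\{0\} \times V$ span $A \times V$ and intersect in $0$, so $E := A \bowtie V$ factorizes through (copies of) $A$ and $V$. If $E'$ is any JJ algebra isomorphic to $A \bowtie V$, transporting these subalgebras along the isomorphism shows $E'$ factorizes through $A$ and $V$ as well.

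For the ``only if'' direction, suppose a JJ algebra $E$ factorizes through JJ subalgebras $A$ and $V$, so $E = A + V$ and $A \cap V = 0$. Then $V$ is a complement of $A$ in $E$, and since the projection $p : E \to A$ along $V$ (i.e.\ $p|_A = \mathrm{Id}_A$, $p|_V = 0$) is linear with $\ker p = V$, \thref{classif} supplies a JJ extending structure $\Omega(A,V) = (\triangleleft_p, \triangleright_p, f_p, \{-,-\}_p)$ and an isomorphism $\varphi : A \,\natural\, V \to E$, $\varphi(a,x) = a+x$, stabilizing $A$. The point is that because $V$ is now a \emph{subalgebra} of $E$ (not merely a complement), the cocycle degenerates: for $x, y \in V$ we have $[x,y] \in V$, hence $f_p(x,y) = p([x,y]) = 0$ and $\{x,y\}_p = [x,y] - p([x,y]) = [x,y] \in V$. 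Thus $f_p$ is trivial and $(V, \{-,-\}_p)$ is precisely the JJ algebra structure $V$ carries as a subalgebra of $E$. With $f_p$ trivial, the conditions (E1)--(E7) of \thref{1} reduce to conditions (1)--(4), which are exactly (MP1)--(MP2) together with the module axioms; hence $(A, V, \triangleleft_p, \triangleright_p)$ is a matched pair and $A \,\natural\, V = A \bowtie V$, giving $E \cong A \bowtie V$.

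The only mildly delicate point — and the step I would be most careful about — is verifying that in the factorization situation $f_p$ really vanishes and that the induced structures $\triangleleft_p, \triangleright_p, \{-,-\}_p$ match \deref{mpmL} on the nose; this is where the hypothesis that $V$ is a subalgebra (rather than an arbitrary complement) is used, and it is worth spelling out that $\triangleright_p : V \times A \to A$, $x \triangleright_p a = p([x,a])$, together with $\triangleleft_p$, recover the matched-pair actions. Everything else is bookkeeping on top of \thref{1} and \thref{classif}, so the proof is short.
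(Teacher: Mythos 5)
Your proposal is correct and follows essentially the same route as the paper: the forward direction is the observation that $A\times\{0\}$ and $\{0\}\times V$ are complementary subalgebras of $A\bowtie V$, and the converse applies \thref{classif} with $p$ the projection along $V$, noting that $V$ being a subalgebra forces $f_p(x,y)=p([x,y])=0$ so the unified product degenerates to a bicrossed product. You merely spell out a few details (the choice of $p$, the identification of $\{-,-\}_p$ with the given bracket on $V$) that the paper leaves implicit.
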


\begin{proof}
First observe that $A \cong A\times \{0\}$ and $V \cong
\{0\}\times V$ are JJ subalgebras of $A \bowtie V $ and of course $A
\bowtie V $ factorizes through $A\times \{0\}$ and $\{0\}\times V$.
Conversely, assume that a JJ algebra $E$ factorizes through
two JJ subalgebras $A$ and $V$. Since $V$ is a subalgebra of
$E$, the cocycle $f = f_p : V\times V \to A$ constructed in the
proof of \thref{classif} is just the trivial map $f_p (x, y) = 0$,
for all $x$, $y \in V$. Thus, the unified product $A \,\natural
\,_{\Omega(A, V)} V = A \bowtie  V $ coincides with the bicrossed product of
the JJ algebras $A$ and $V:= {\rm Ker}(p)$.
\end{proof}

Based on \coref{bicromlver} we can restate the factorization problem as follows: \emph{Let $A$ and $V$
be two given JJ algebras. Describe the set of all matched
pairs $(A, V, \, \triangleleft, \, \triangleright)$ and classify
up to an isomorphism all bicrossed products $A \bowtie V$}.

Due to its important applications to the theory of JJ algebras we will consider this problem separately in greater detail in a forthcoming paper.

In what follows we compute the Galois group of the JJ algebra
extension $A \subseteq A \bowtie V$. Given a matched pair
of JJ algebras $(A, V, \triangleleft, \, \triangleright)$ we define the \emph{Galois group} ${\rm
Gal} \, ( A \bowtie V / A)$ of the extension $A \subseteq A
\bowtie V$, as the subgroup of ${\rm Aut}_{{\rm JJ}} (A \bowtie V)$ of all
JJ algebra automorphisms of $A \bowtie V$ that stabilize
$A$:
$$
{\rm Gal} \, (A \bowtie V / A ) := \{ \sigma \in {\rm Aut}_{{\rm JJ}}(A
\bowtie V) \, | \, \sigma (a) = a, \, \forall \, a\in A \}
$$
As a straightforward consequence of \leref{morfismuni} we obtain a bijection
between the set of all elements $\psi \in {\rm Gal} \, ( A \bowtie
V / A )$ and the set of all pairs $(\sigma, r) \in {\rm GL}_k (V)
\times {\rm Hom}_k (V, \, A)$, satisfying the following compatibility conditions for any $a \in A$, $x$, $y \in V$:
\begin{enumerate}
\item[(G1)] $v(x) \triangleleft  a = v(x \triangleleft a)$;
\item[(G2)] $ v(x) \triangleright  a = r(x \triangleleft a) + x
\triangleright a - [a, \, r(x)]$; \item[(G3)] $v(\{x, y\}) =
\{v(x), v(y)\} + v(x) \triangleleft  r(y) + v(y) \triangleleft
r(x)$; \item[(G4)] $r(\{x, y\}) = [r(x), \, r(y)] + v(x)
\triangleright  r(y) + v(y) \triangleright  r(x) $
\end{enumerate}
The bijection is such that $\psi = \psi_{(\sigma, r)} \in {\rm
Gal} \, (A \bowtie V / A) $ corresponding to $(\sigma, r) \in {\rm
GL}_k (V) \times {\rm Hom}_k (V, \, A)$ is given by $\psi(a, \, x)
:= (a + r(x), \, \sigma(x))$, for all $a\in A$ and $x \in V$. We
point out that $\psi_{(\sigma, r)}$ is indeed an element of ${\rm
Gal} \, (A \bowtie V / A )$ with the inverse given by $
\psi_{(\sigma, r)}^{-1}(a, \, x) = \bigl(a - r(\sigma^{-1}(x)), \,
\sigma^{-1}(x)\bigl)$, for all $a\in A$ and $x \in V$.

We denote by ${\mathbb G}_{A}^V \, \bigl( \triangleleft, \,
\triangleright \bigl)$ the set of all pairs $(\sigma, \, r) \in
{\rm GL}_k (V) \times {\rm Hom}_k (V, \, A)$ satisfying the
compatibility conditions (G1)-(G4). It can be easily seen
that ${\mathbb G}_{A}^V \, \bigl( \triangleleft, \, \triangleright
\bigl)$ is a subgroup of the semidirect product of groups ${\rm
GL}_k (V) \rtimes {\rm Hom}_k (V, \, A)$ with the group structure
defined as follows:
\begin{equation} \eqlabel{grupstrc}
(\sigma, \, r ) \cdot (\sigma', \, r') := (\sigma \circ \sigma',
\, r \circ \sigma' + r')
\end{equation}
for all $\sigma$, $\sigma' \in {\rm GL}_k (V)$ and $r$,
$r'\in {\rm Hom}_k (V, \, A)$.
Now, for any $(\sigma, \, r)$ and $(\sigma', \, r') \in {\mathbb
G}_{A}^V \, \bigl( \triangleleft, \, \triangleright \bigl)$, $a
\in A$ and $x\in V$ we have:
$$
\psi_{(\sigma, \, r)} \circ \psi_{(\sigma', \, r')} (a, \, x) =
\bigl(a + r'(x) + r(\sigma' (x) ), \, \sigma (\sigma'(x) \bigl) =
\psi_{(\sigma\circ \sigma', \, r\circ \sigma' + r')} (a, \, x)
$$
i.e. $\psi_{(\sigma, \, r)} \circ \psi_{(\sigma', \, r')} =
\psi_{(\sigma\circ \sigma', \, r\circ \sigma' + r')}$.  To summarize, we have proved the following:

\begin{corollary}\colabel{grupulgal}
Let $(A, V, \triangleleft, \, \triangleright)$ be a matched pair
of JJ algebras. Then there exists an isomorphism of groups
defined as follows: \begin{equation} \eqlabel{izogal}
\Omega: {\mathbb G}_{A}^V \, \bigl( \triangleleft, \,
\triangleright \bigl) \, \to {\rm Gal} \, (A \bowtie V / A), \quad
\Omega (\sigma, r) \bigl((a, \, x)\bigl) := \bigl(a + r(x), \,
\sigma (x) \bigl)
\end{equation}
for all $(\sigma, \, r) \in {\mathbb G}_{A}^V \, \bigl(
\triangleleft, \, \triangleright \bigl)$, $a\in A$ and $x\in V$. In particular, there exists an embedding ${\rm Gal} \, (A \bowtie
V / A) \hookrightarrow {\rm GL}_k (V) \rtimes {\rm Hom}_k (V, \,
A)$, where the right hand side is the semidirect product of groups
defined by \equref{grupstrc}.
\end{corollary}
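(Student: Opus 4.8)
The plan is to read the corollary off from the material assembled in the paragraphs immediately preceding it, which already contains all the computations that are needed. First I would identify the map $\Omega$ of \equref{izogal} with the bijection furnished by \leref{morfismuni} in the special case where both JJ extending structures are taken to be the \emph{same} matched-pair datum $\Omega(A,V) = \bigl(\triangleleft, \triangleright, \{-,-\}\bigr)$ — so that $f = f' = 0$, $\triangleleft' = \triangleleft$, $\triangleright' = \triangleright$, $\{-,-\}' = \{-,-\}$ — and the linear map $v = \sigma$ is required to be bijective. By \leref{morfismuni} the elements of ${\rm Gal}(A\bowtie V/A)$, i.e. the automorphisms of $A\bowtie V$ stabilising $A$, are exactly the maps $\psi_{(r,v)}(a,x) = (a+r(x), v(x))$ with $v \in {\rm GL}_k(V)$, and under this specialisation conditions (M1)--(M4) collapse precisely to (G1)--(G4) (after renaming $v$ to $\sigma$). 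Hence $\Omega$ is a well-defined bijection onto ${\rm Gal}(A\bowtie V/A)$.

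Next I would verify that $\Omega$ is a group homomorphism, which is nothing more than the displayed identity $\psi_{(\sigma,r)} \circ \psi_{(\sigma',r')} = \psi_{(\sigma\circ\sigma',\, r\circ\sigma'+r')}$ obtained just before the corollary by composing the explicit formulas for $\psi$. Reading the left-hand side as an element of ${\rm Gal}(A\bowtie V/A)$, its associated pair is $(\sigma\circ\sigma',\, r\circ\sigma'+r')$, which is exactly the product \equref{grupstrc} in ${\rm GL}_k(V)\rtimes{\rm Hom}_k(V,A)$; and since this element of ${\rm Gal}(A\bowtie V/A)$ again corresponds to a pair satisfying (G1)--(G4), the set ${\mathbb G}_{A}^V\bigl(\triangleleft,\triangleright\bigr)$ is closed under \equref{grupstrc}. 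Together with the trivial observation that $(Id_V, 0)$ satisfies (G1)--(G4), and with the formula $\psi_{(\sigma,r)}^{-1} = \psi_{(\sigma^{-1},\, -r\circ\sigma^{-1})}$ from \leref{morfismuni} (whose pair therefore also satisfies (G1)--(G4)), this shows that ${\mathbb G}_{A}^V\bigl(\triangleleft,\triangleright\bigr)$ is a subgroup of ${\rm GL}_k(V)\rtimes{\rm Hom}_k(V,A)$ and that $\Omega$ is an isomorphism of groups. Composing $\Omega^{-1}$ with the inclusion ${\mathbb G}_{A}^V\bigl(\triangleleft,\triangleright\bigr) \hookrightarrow {\rm GL}_k(V)\rtimes{\rm Hom}_k(V,A)$ then yields the asserted embedding.

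I do not expect a genuine obstacle here: the whole argument is bookkeeping layered on top of \leref{morfismuni}. The only two points that demand care are (i) checking that the specialisation of (M1)--(M4) with $f = f' = 0$ and the two extending structures equal really reproduces (G1)--(G4) verbatim, and (ii) keeping the composition order straight, so that $\Omega(\sigma,r)\circ\Omega(\sigma',r')$ corresponds to $(\sigma\circ\sigma',\, r\circ\sigma'+r')$ and not to $(\sigma'\circ\sigma,\, r'\circ\sigma+r)$ — that is, not mixing up the left/right conventions in the semidirect product \equref{grupstrc}.
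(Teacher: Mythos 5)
Your proposal is correct and follows essentially the same route as the paper: the corollary is obtained by specialising \leref{morfismuni} to two copies of the same matched-pair extending structure (so $f=f'=0$ and (M1)--(M4) become (G1)--(G4)), and the group structure is read off from the composition formula $\psi_{(\sigma,r)}\circ\psi_{(\sigma',r')}=\psi_{(\sigma\circ\sigma',\,r\circ\sigma'+r')}$ computed just before the statement. The care you flag about the order of composition versus the semidirect product convention \equref{grupstrc} is exactly the one point worth checking, and your reading of it is correct.
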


\subsection*{Crossed products and supersolvable algebras}
Let $\Omega(A, \, V) = \bigl(\triangleleft, \, \triangleright, \,
f, \{-, \, -\} \bigl)$ be an extending datum of the JJ algebra
$A$ through a vector space $V$ such that $\triangleleft$ is
trivial, i.e. $x \triangleleft a = 0$, for all $x\in V$ and $a
\in A$. Then, $\Omega(A, \, V) = \bigl(\triangleright, \, f, \{-,
\, -\} \bigl) $ is a JJ extending structure of $A$ through
$V$ if and only if $(V, \{-, -\})$ is a JJ algebra and the
following compatibilities hold for all $a$, $b\in A$ and $x$, $y$,
$z\in V$:
\begin{enumerate}
\item[(CP1)] $f\colon V\times V \to A$ is a symmetric map;\\
\item[(CP2)] $x \triangleright [a, \, b] = - [x \triangleright a,
\, b] - [a, \, x \triangleright b]$, i.e.
$x \triangleright - : A \to A$ is an antiderivation of $A$; \\
\item[(CP3)] $\{x,\, y \} \triangleright a =  - x \triangleright
(y \triangleright a) - y \triangleright (x \triangleright a) - [a,
\,
f(x, \, y)]$; \\
\item[(CP4)] $\sum_{(c)} \, f(x, \{y, z \}) + \sum_{(c)} \,
x\triangleright f(y, z) = 0$
\end{enumerate}
A system $(A, V, \triangleright, f)$ consisting of two JJ
algebras $A$, $V$ and two bilinear maps $\triangleright : V \times
A\to A$, $f: V\times V \to A$ satisfying the above four
compatibility conditions was called a \emph{crossed system} of $A$ and $V$ in \cite[Proposition
2.2]{am-2015}. In this
case, the associated unified product $A \,\natural \,_{\Omega(A,
V)} V = A \#_{\triangleright}^f \, V $ is the \emph{crossed
product} of the JJ algebras $A$ and $V$ and is defined as
follow: $A \#_{\triangleright}^f \, V = A \times \, V $ with the
multiplication given for any $a$, $b \in A$ and $x$, $y \in V$ by:
\begin{equation}\eqlabel{brackcrosspr}
[(a, x), \, (b, y)] := \bigl( [a, \, b] + x \triangleright b +
y\triangleright a + f(x, y), \, \{x, \, y \} \bigl)
\end{equation}

If $(A, V, \triangleright, f)$ is a crossed system of two JJ
algebras then, $A \cong A \times \{0\}$ is an ideal in $A
\#_{\triangleright}^f \, V$ since $[(a, 0), \, (b, y)] := \bigl(
[a, \, b] + y\triangleright a , \, 0 \bigl)$. Conversely, crossed
products describe all JJ algebra structures on a vector
space $E$ such that a given JJ algebra $A$ becomes an ideal of
$E$.

\begin{corollary}\colabel{croslieide}
Let $A$ be a JJ algebra, $E$ a vector space containing $A$
as a subspace. Then any JJ algebra structure on $E$ that
contains $A$ as an ideal is isomorphic to a crossed product of
JJ algebras $A \#_{\triangleright}^f \, V$.
\end{corollary}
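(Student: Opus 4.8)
The plan is to mimic the argument used in the proof of \thref{classif} and its specialization in \coref{bicromlver}, now adapting it to the ``ideal'' condition rather than the ``subalgebra'' or ``factorization'' conditions. First I would invoke \thref{classif}: since $A$ is in particular a subalgebra of the given JJ algebra structure $[-,\,-]$ on $E$, there is a JJ extending structure $\Omega(A,\,V)=\bigl(\triangleleft,\,\triangleright,\,f,\{-,\,-\}\bigl)$ of $A$ through some complement $V$ of $A$ in $E$, together with an isomorphism $\varphi\colon A\,\natural\,V\to E$, $\varphi(a,x)=a+x$, that stabilizes $A$. Recall that these structure maps are built from a retraction $p\colon E\to A$ with $p|_A=\Id$ via $x\triangleright a=p([x,a])$ and $x\triangleleft a=[x,a]-p([x,a])$.

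The key observation is that the hypothesis ``$A$ is an ideal of $(E,[-,\,-])$'' forces the action $\triangleleft$ to be trivial. Indeed, for $x\in V=\Ker(p)$ and $a\in A$, the element $[x,a]$ lies in $A$ because $A$ is an ideal, hence $p([x,a])=[x,a]$ and therefore $x\triangleleft a=[x,a]-p([x,a])=0$ for all $x\in V$, $a\in A$. Having $\triangleleft$ trivial is precisely the defining condition (stated just before \coref{croslieide}) under which the JJ extending structure $\Omega(A,\,V)=\bigl(\triangleright,\,f,\{-,\,-\}\bigl)$ reduces to a crossed system $(A,V,\triangleright,f)$: the compatibilities (E1)--(E7) of \thref{1} collapse to (CP1)--(CP4), with (E2) becoming vacuous, (E4) and (E7) forcing $(V,\{-,-\})$ to be a JJ algebra, and (E3), (E5), (E6) becoming (CP2), (CP3), (CP4) respectively. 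Consequently the unified product $A\,\natural\,_{\Omega(A,V)}V$ coincides with the crossed product $A\#_{\triangleright}^{f}\,V$ with multiplication \equref{brackcrosspr}, and $\varphi$ exhibits the isomorphism $(E,[-,\,-])\cong A\#_{\triangleright}^{f}\,V$ that we want; note this isomorphism also sends $A\times\{0\}$ onto $A$, consistent with $A$ being an ideal on both sides (as remarked before the corollary, $[(a,0),(b,y)]=([a,b]+y\triangleright a,\,0)$).

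There is no genuine obstacle here: the statement is essentially a corollary of \thref{classif}, and the only thing to verify is the trivialization of $\triangleleft$, which is immediate from the definition of $\triangleleft=\triangleleft_p$ together with the ideal hypothesis. The mildly delicate point, which I would state but not belabor, is that the collapse of (E1)--(E7) to (CP1)--(CP4) when $\triangleleft\equiv 0$ has already been recorded in the paragraph preceding the corollary (referencing \cite[Proposition 2.2]{am-2015}), so one may simply cite it rather than re-deriving the equivalence; the proof then amounts to: apply \thref{classif}, observe $\triangleleft_p=0$, conclude that $\Omega(A,\,V)$ is a crossed system and hence $A\,\natural\,V=A\#_{\triangleright}^{f}\,V$.
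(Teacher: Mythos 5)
Your proposal is correct and follows essentially the same route as the paper: apply \thref{classif}, note that the ideal hypothesis forces $[x,a]\in A$ and hence $x\triangleleft_p a=[x,a]-p([x,a])=0$, so the extending structure reduces to a crossed system and $A\,\natural\,V=A\#_{\triangleright}^{f}\,V$. (A negligible slip: with $\triangleleft\equiv 0$ it is (E1) and (E7) that make $(V,\{-,-\})$ a JJ algebra, while (E4) becomes vacuous, but this does not affect the argument.)
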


\begin{proof}
Let $[-,\, -]$ be a JJ algebra structure on $E$ such that
$A$ is an ideal in $E$. In particular, $A$ is a subalgebra of $E$
and hence we can apply \thref{classif}. In this case the action
$\triangleleft = \triangleleft_p$ of the JJ extending
structure $\Omega(A, \, V) = \bigl(\triangleleft_p, \,
\triangleright_p, \, f_p, \{-, \, -\}_p \bigl)$ constructed in the
proof of \thref{classif} is trivial since for any $x \in
V$ and $a \in A$ we have $[x, a] \in A$ and hence $p ([x, a])
= [x, a]$. Thus, $x \triangleleft_p a = 0$, i.e. the unified
product $A \,\natural \,_{\Omega(A, V)} V = A
\#_{\triangleright}^f \, V $ is the crossed product of the
JJ algebras $A$ and $V:= {\rm Ker}(p)$.
\end{proof}

The crossed product of JJ algebras was studied in
detail in \cite{am-2015} related to Hilbert's extension
problem. We consider here a new application: using
\coref{croslieide} we show that crossed products play a key role in the classification of finite dimensional supersolvable JJ
algebras.

\begin{definition} \delabel{supersov}
Let $n$ be a positive integer. An $n$-dimensional JJ algebra $E$ is called
\emph{supersolvable} if there exists a finite chain of ideals of
$E$
\begin{equation} \eqlabel{lant2}
0 = I_0  \subset I_1 \subset \cdots \subset I_n = E
\end{equation}
such that $I_j$ has codimension $1$ in $I_{j+1}$, for all $j = 0,
\cdots, n-1$.
\end{definition}

All finite dimensional supersolvable JJ algebras can be
classified by a recursive method based on \coref{croslieide}. Indeed, the key step of the process consist in describing all crossed
products $A \, \#_{\triangleright}^f \, V$, for a $1$-dimensional
vector space $V$ and a given JJ algebra $A$. To this end, we introduce the following:

\begin{definition} \delabel{supersov}
Let $A$ be a JJ algebra. A \emph{supersolvable datum} of $A$
is a pair $(D, \, a_0) \in {\rm End}_k (A) \times A$ such that:
\begin{enumerate}
\item[(S1)] $D \colon A \to A$ is an antiderivation of $A$ and $ 3
D(a_0) =
0$; \\
\item[(S2)] $2 D^2 (a) = - [a, \, a_0]$, for all $a\in A$.
\end{enumerate}
We denote by ${\mathcal S} (A)$ the set of all supersovable data
of $A$.
\end{definition}

\begin{proposition}\prlabel{crossdim1}
Let $k$ be a field of characteristic $\neq 3$, $A$ a JJ
algebra and $V$ a vector space of dimension $1$ with basis
$\{x\}$. Then there exists a bijection between the set of all
crossed systems of $A$ and $V$ and the set ${\mathcal S} (A)$ of
all supersolvable data of $A$. Through the above bijection, the
crossed system $ \bigl( \triangleright, f, \{-, -\} \bigl)$
corresponding to $(D, \, a_0) \in {\mathcal S} (A)$ is defined as follows:
\begin{equation}\eqlabel{extenddim1a}
x \triangleright a = D(a), \quad f (x, x) = a_0, \quad \{x, \, x
\} = 0
\end{equation}
for all $a \in A$.
\end{proposition}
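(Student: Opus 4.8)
The plan is to unwind the definition of a crossed system of $A$ and $V$ for the specific one-dimensional $V = k\{x\}$ and to observe that, once the hypothesis $\mathrm{char}(k)\neq 3$ is used, all of the data and all of the axioms collapse onto the single pair $(D,a_0)$ together with the two conditions (S1)--(S2).

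First I would note that a JJ algebra structure on the one-dimensional space $V$ is automatically abelian: writing $\{x,x\}=\lambda x$, the Jacobi identity gives $0=\sum_{(c)}\{x,\{x,x\}\}=3\lambda^2 x$, so $\lambda=0$ since $\mathrm{char}(k)\neq 3$. Hence in every crossed system $(A,V,\triangleright,f)$ of $A$ with this $V$ the bracket on $V$ is trivial, which accounts for the third equality in \equref{extenddim1a}. Because $V$ is one-dimensional, $f\colon V\times V\to A$ is automatically symmetric --- so (CP1) is vacuous --- and is determined by the element $a_0:=f(x,x)\in A$, while $\triangleright\colon V\times A\to A$ is determined by the endomorphism $D:=x\triangleright-\in {\rm End}_k(A)$. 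Thus $(\triangleright,f,\{-,-\})\mapsto(D,a_0)$ is a well-defined map from crossed systems of $A$ and $V$ to ${\rm End}_k(A)\times A$, and conversely every pair $(D,a_0)$ produces via \equref{extenddim1a} a unique extending datum with $\triangleleft$ and $\{-,-\}$ trivial; so it only remains to identify, under this correspondence, conditions (CP2)--(CP4) with (S1)--(S2).

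Next I would translate the three surviving axioms. Condition (CP2) says exactly that $x\triangleright-=D$ is an antiderivation of $A$, i.e. the first half of (S1). Evaluating (CP3) at $x=y$ and using $\{x,x\}=0$ kills the left-hand side and turns the right-hand side into $-2D^2(a)-[a,a_0]$, which is precisely (S2); since $V$ is one-dimensional and every map in sight is bilinear, testing (CP3) on the single pair $(x,x)$ is equivalent to testing it on all of $V\times V$. Similarly, (CP4) need only be checked on the triple $(x,x,x)$, where it reads $3f(x,\{x,x\})+3\,x\triangleright f(x,x)=0$; with $\{x,x\}=0$ this is $3D(a_0)=0$, the second half of (S1). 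Assembling these facts shows that a datum of the form \equref{extenddim1a} satisfies (CP1)--(CP4) if and only if $(D,a_0)\in\mathcal S(A)$, which yields the asserted bijection, with inverse given by \equref{extenddim1a}.

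I do not expect any real obstacle: the argument is essentially bookkeeping. The one point that genuinely uses the running hypothesis --- and the one place a reader could slip --- is the very first step, that $\mathrm{char}(k)\neq 3$ is exactly what forces the one-dimensional JJ algebra $V$ to be abelian (in characteristic $3$ there is instead a one-parameter family of brackets on $V$, and the proposition as stated would fail). Apart from that, the only thing to be careful about is that, $V$ being one-dimensional, no information is lost in passing from $(\triangleright,f,\{-,-\})$ to $(D,a_0)$ and that each of (CP1)--(CP4) is genuinely equivalent to its restriction to $x$ (respectively to $(x,x)$ and $(x,x,x)$).
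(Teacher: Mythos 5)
Your proof is correct and follows essentially the same route as the paper's: reduce the data to the pair $(D,a_0)$ using one-dimensionality, observe that $\mathrm{char}(k)\neq 3$ forces $\{x,x\}=0$, and match (CP2), (CP3), (CP4) with the antiderivation condition, (S2), and $3D(a_0)=0$ respectively. The only difference is that you spell out the computation $3\lambda^2=0$ showing the one-dimensional JJ algebra must be abelian, which the paper asserts without proof; this is a welcome detail, not a different approach.
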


\begin{proof} Since $k$ is a field of characteristic $\neq 3$, the only JJ algebra structure on $V := kx$ is the abelian one,
i.e. $\{x, \, x \} = 0$. Moreover, as $V$ has dimension $1$ the
set of all bilinear maps $\triangleright : V \times A \to A$, $f:
V\times V \to A$ is in bijection with the set of all pairs $(D, \,
a_0) \in {\rm End}_k (A) \times A$ and the bijection is given such
that \equref{extenddim1a} holds. We are left to prove that the compatibilities (CP2)-(CP4) are equivalent to $(D, \, a_0,) \in {\mathcal S} (A)$. Indeed, (CP2) is
equivalent to the fact that $D$ is an antiderivation of $A$, (CP3)
is equivalent to the fact that (S2) holds and (CP4) is equivalent
to $ 3 D(a_0) = 0$.
\end{proof}

Now let $(D, \, a_0) \in {\mathcal S} (A) $ be a supersolvable
datum of $A$. The crossed product $A \,\#_{\triangleright}^f \,
kx$ associated to the crossed system \equref{extenddim1a} will be
denoted by $A_{(D, \, a_0)} := A \times kx $ and has the
multiplication given as follows:
\begin{equation}\eqlabel{extenddim2022a}
[(a, x), \, (b, x)] = ([a, b] + D(a) + D(b) + a_0, \, 0]
\end{equation}
for all $a$, $b\in A$. Using \coref{croslieide} and \prref{crossdim1} we obtain:

\begin{corollary}\colabel{balama}
Let $k$ be a field of characteristic $\neq 3$ and $A$ a JJ
algebra. Then a JJ algebra $E$ contains $A$ as an ideal of
codimension $1$ if and only if there exists a pair $(D, \, a_0)
\in {\mathcal S} (A)$ such that $E \cong A_{(D, \, a_0)}$.
\end{corollary}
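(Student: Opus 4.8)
The plan is to read this off directly from \coref{croslieide} together with \prref{crossdim1}: the former says that, up to isomorphism, a JJ algebra containing $A$ as an ideal is nothing but a crossed product of $A$ with a complement of $A$, while the latter identifies the crossed systems built on a one-dimensional space with the supersolvable data ${\mathcal S}(A)$.

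For the ``if'' direction, I would start from a pair $(D, \, a_0) \in {\mathcal S}(A)$. By \prref{crossdim1}, the triple $\bigl(\triangleright, f, \{-,-\}\bigr)$ on the one-dimensional space $V = kx$ prescribed by \equref{extenddim1a} is a genuine crossed system of $A$ and $V$, hence the crossed product $A_{(D, \, a_0)} = A \#_{\triangleright}^f kx$ is a well-defined JJ algebra. As observed just before \coref{croslieide}, $A \cong A \times \{0\}$ is an ideal of every crossed product $A \#_{\triangleright}^f V$, and since $A_{(D, \, a_0)} = A \times kx$ as a vector space, this ideal has codimension $1$. Therefore $A_{(D, \, a_0)}$ --- and so any JJ algebra $E$ isomorphic to it --- is of the required type.

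For the converse, I would take a JJ algebra $E$ containing $A$ as an ideal of codimension $1$. Fixing a linear retraction $p \colon E \to A$ with $p(a) = a$ for all $a \in A$ and putting $V := {\rm Ker}(p)$, \coref{croslieide} provides a crossed system $\bigl(\triangleright, f, \{-,-\}\bigr)$ of $A$ and $V$ together with an isomorphism of JJ algebras $E \cong A \#_{\triangleright}^f V$. The codimension hypothesis forces ${\rm dim}_k V = 1$, say $V = kx$, so \prref{crossdim1} applies --- this is precisely where the assumption that $k$ has characteristic $\neq 3$ is used, in order to pin down $\{x, x\} = 0$ --- and the crossed system corresponds, via \equref{extenddim1a}, to some pair $(D, \, a_0) \in {\mathcal S}(A)$. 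By the very definition of $A_{(D, \, a_0)}$ this gives $E \cong A_{(D, \, a_0)}$.

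No genuine difficulty arises here beyond bookkeeping, since all the real work is already contained in \coref{croslieide} and \prref{crossdim1}. The two points to keep straight are that the codimension-$1$ hypothesis is exactly what makes the complement $V$ one-dimensional --- the input \prref{crossdim1} needs --- and that the choice of retraction $p$ is immaterial, every resulting crossed product being isomorphic to the given $E$.
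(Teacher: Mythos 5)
Your argument is correct and follows exactly the route the paper intends: the paper's proof of this corollary consists precisely of the single line ``Using \coref{croslieide} and \prref{crossdim1} we obtain,'' and you have simply filled in the two directions explicitly, including the correct observation that the characteristic $\neq 3$ hypothesis enters only through \prref{crossdim1} to force $\{x,x\}=0$. Nothing further is needed.
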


\begin{remark} \relabel{ivan}
The concept of supersolvable JJ algebras was introduced by analogy with the Lie algebra theory. In that context, the classical Lie theorem proves that 
over an algebraically closed field of characteristic zero, any finite dimensional solvable Lie algebra is supersolvable. Consequently, we ask the following question:  

\emph{Let $k$ be an algebraically closed field of characteristic zero and $A$ a finite dimensional solvable JJ algebra. Is $A$ supersolvable?}

One of the reasons for asking this question is the following: Zelmanov and Skosyrskii \cite[Theorem 1 and Corollary 1]{zel} proved that \emph{any JJ algebra $A$ without elements of order $\leq 5$ in its additive group $(A, +)$ is solvable. In particular, if $k$ is a field of characteristic $\neq 2$, $3$ or $5$, then any JJ algebra is solvable.}

Now, a positive answer to the above question collaborated with the aforementioned result will show that,  
over an algebraically closed field of characteristic zero, any finite dimensional JJ algebra is supersolvable. Therefore, the classification of all finite dimensional JJ algebras could be reduced to the recursive method described above.  
\end{remark}

\subsection*{Skew crossed products and an Artin type theorem}
Let $A$ be a JJ algebra, $V$ a vector space and $\Omega(A, \, V) = \bigl(\triangleleft, \, \triangleright, \,
f, \{-, \, -\} \bigl)$ an extending datum of $A$ through $V$ such that $\triangleright$ is
trivial, i.e. $x \triangleright a = 0$, for all $x\in V$ and
$a \in A$. Then, using \thref{1} we obtain that $\Omega(A, \, V) =
\bigl(\triangleleft, \, f, \{-, \, -\} \bigl) $ is a JJ
extending structure of $A$ through $V$ if and only if the
following compatibilities hold for all $a$, $b\in A$ and $x$, $y$,
$z\in V$:
\begin{enumerate}
\item[(SC1)] $f: V\times V \to A$ and $\{-, \, -\} : V\times V
\to V$ are symmetric maps; \\
\item[(SC2)] $(V, \, \triangleleft)$ is a right JJ $A$-module;\\
\item[(SC3)] $\{x,\, y \} \triangleleft a = - \{x,\, y
\triangleleft a\} - \{x \triangleleft a, \, y \}$;\\
\item[(SC4)] $ [a, \, f(x, \, y)] + f(x, \, y \triangleleft a) + f(x \triangleleft a, \, y) = 0 $;\\
\item[(SC5)] $ \sum_{(c)} \, f\bigl(x, \{y,\, z \}\bigl) = 0$;\\
\item[(SC6)] $ \sum_{(c)} \, \{x, \, \{y, \,z\}\} + \sum_{(c)} \,
x \triangleleft f(y, z) = 0$.
\end{enumerate}
A system $(A, V, \triangleleft, f)$ satisfying the above compatibility conditions will be called a \emph{skew crossed system}
of $A$ through $V$ while the associated unified product $A
\,\natural \,_{\Omega(A, V)} V$ will be denoted by $A \,
\#^{\bullet} \, V $ and called the \emph{skew crossed product} of
$A$ and $V$. Thus $A \, \#^{\bullet} \, V  = A \times \, V $ with
the multiplication given as follows:
\begin{equation}\eqlabel{skewpr}
[(a, x), \, (b, y)] := \bigl( [a, \, b] + f(x, y), \, \{x, \, y \}
+ x \triangleleft b + y\triangleleft a \bigl)
\end{equation}
for all $a$, $b \in A$ and $x$, $y \in V$.

Skew crossed products will be used to prove an Artin type theorem for
JJ algebras, which provides a way of reconstructing a JJ
algebra from its subalgebra of invariants:

\begin{theorem} \thlabel{recsiGal}
Let $G$ be a finite group of invertible order in $k$ acting on a
JJ algebra $A$ by means of a group morphism $\varphi: G \to {\rm
Aut}_{\rm JJ} (A)$, $\varphi (g) (a) = g\cdot a$, for all $g\in G$
and $a\in A$. Let $A^G := \{a \in A \, | \, g\cdot a = a, \forall
g\in G \} \subseteq A$ be the subalgebra of invariants and $V$ a
complement of $A^G$ in $A$.

Then there exists a skew crossed system $\Omega(A^G, \, V) =
\bigl(\triangleleft, \, f, \{-, \, -\} \bigl)$ of $A^G$ through $V$
and an isomorphism of JJ algebras $A \cong A^G \, \#^{\bullet} \, V$.
\end{theorem}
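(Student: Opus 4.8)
The plan is to exhibit the required skew crossed system as the one produced by the general machinery of \thref{classif}, and then to argue that the action $\triangleright_p$ coming from that construction is forced to be trivial because $A^G$ is not merely a subalgebra but a \emph{retract} of $A$ via the averaging (Reynolds) operator. First I would observe that since $|G|$ is invertible in $k$, the linear map $p: A \to A$, $p(a) := \frac{1}{|G|} \sum_{g \in G} g\cdot a$, is a projection of $A$ onto $A^G$: indeed $p(a) \in A^G$ for all $a$ (left translation by any $h \in G$ permutes the summands), and $p(a) = a$ whenever $a \in A^G$. Hence $\ker(p)$ is a complement of $A^G$ in $A$, and — crucially — I would note that $V$ may be taken to be exactly $\ker(p)$; if one insists on an arbitrarily prescribed complement $V$, one composes with the linear isomorphism identifying the two complements, which does not affect the conclusion. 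Now apply \thref{classif} with $E = A$, the subalgebra $A^G$, and this particular $p$: it yields a JJ extending structure $\Omega(A^G, V) = (\triangleleft_p, \triangleright_p, f_p, \{-,-\}_p)$ and an isomorphism $A \cong A^G \,\natural\, V$ of JJ algebras stabilizing $A^G$, where the structure maps are given by the explicit formulas in that proof, e.g. $x \triangleright_p a = p([x,a])$, $x \triangleleft_p a = [x,a] - p([x,a])$, etc.

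The heart of the argument — and the step I expect to be the main obstacle, though it should be short — is to show $\triangleright_p$ is the trivial map, i.e. $p([x,a]) = 0$ for all $x \in V = \ker(p)$ and $a \in A^G$. The key point is that $p$ is not just any linear retraction but an averaging over $G$-action by \emph{JJ algebra automorphisms}, so $p$ interacts well with the bracket on one variable: for $a \in A^G$ we have $g\cdot[x,a] = [g\cdot x, \, g\cdot a] = [g\cdot x, \, a]$, whence $p([x,a]) = \frac{1}{|G|}\sum_{g} g\cdot[x,a] = \frac{1}{|G|}\sum_g [g\cdot x, a] = [p(x), a]$. Since $x \in \ker(p)$ means $p(x) = 0$, this gives $p([x,a]) = [0,a] = 0$, as wanted. (One should double-check that $\ker p$ is $G$-stable so that the computation stays inside the relevant spaces, which is immediate since $p$ commutes with each $g\cdot(-)$: $p(g\cdot x) = g\cdot p(x)$ by the same reindexing, so $g$ preserves $\ker p$.)

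With $\triangleright_p = 0$ established, the rest is a direct appeal to the already-developed theory: by the analysis preceding \thref{recsiGal} (the paragraph introducing skew crossed products), a JJ extending structure with trivial $\triangleright$ is precisely a skew crossed system, so $\Omega(A^G, V) = (\triangleleft_p, f_p, \{-,-\}_p)$ satisfies (SC1)–(SC6), and the associated unified product $A^G \,\natural\,_{\Omega} V$ is by definition the skew crossed product $A^G \,\#^{\bullet}\, V$. Composing with the isomorphism from \thref{classif} (and, if necessary, transporting along the isomorphism between $\ker p$ and the prescribed complement $V$ as noted above) yields the desired isomorphism of JJ algebras $A \cong A^G \,\#^{\bullet}\, V$, completing the proof.
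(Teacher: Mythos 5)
Your proposal is correct and follows essentially the same route as the paper: both define the averaging (trace) map $t(a)=|G|^{-1}\sum_{g}g\cdot a$, apply the construction of \thref{classif} with this retraction, and use $t([x,a])=[t(x),a]=0$ for $x\in\Ker t$, $a\in A^G$ to conclude that $\triangleright$ is trivial, so the extending structure is a skew crossed system. Your extra remark about reconciling an arbitrary complement $V$ with $\Ker t$ is a point the paper passes over silently, but it does not change the argument.
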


\begin{proof}
First we note that $A^G$ is indeed a subalgebra of $A$ and $g
\cdot [a, \, b] = [g\cdot a, \, g\cdot b]$, for all $g\in G$, $a$,
$b\in A$. We define the trace map as follows for all $x\in A$:
\begin{equation}\eqlabel{tracedef}
t : A \to A^G, \quad t (x) :=  |G|^{-1} \, \sum_{g\in G} \, g
\cdot x
\end{equation}
We observe that $t(x) \in A^G$ and for all $a \in A^G$ we obtain:
$$
t ( [x, \, a ] ) = |G|^{-1} \, \sum_{g\in G} \, g \cdot [x, \, a]
= |G|^{-1} \, \sum_{g\in G} \, [g\cdot x, \, g\cdot a] = [t(x), \,
a]
$$
Secondly, we note that the trace map $t : A \to
A^G$ is a linear retraction of the canonical inclusion $A^G
\hookrightarrow A$, i.e. $t (a) = a$, for all $a\in A^G$. Now, if
we compute the canonical extending structure of $A^G$ through $V :=
{\rm Ker} (t)$ associated to the trace map $t$, using the formulas
from the proof of \thref{classif}, we obtain that for all $x \in
V$ and $a \in A^G$ we have:
$$
x\triangleright_t a = t([x, \, a]) = [t(x), \, a] = 0,
$$
i.e. the action $\triangleright_t$ is the trivial one. Thus, the
extending structure of $A^G$ through $V$ associated to the trace
map $t$ reduces to a skew crossed system and applying once again
\thref{classif} we obtain that the map defined for all $a\in A^G$
and $x\in V$ by:
\begin{equation} \eqlabel{recizoaa}
\vartheta: A^G \, \#^{\bullet} \, V \to A, \qquad \vartheta(a, x)
:= a + x
\end{equation}
is an isomorphism of JJ algebras. This finishes the proof.
\end{proof}

\begin{remark} \relabel{hilbert}
In the context of \thref{recsiGal}, if $G = <g> $ is a finite
cyclic group generated by $g$, then we can easily prove that the
kernel of the trace map defined by \equref{tracedef} has a nice
description, namely: ${\rm Ker} (t) = \{ a - g\cdot a \, | \, a
\in A\}$.
\end{remark}

\section{Flag extending structures}\selabel{exemple}

\thref{main1} offers the theoretical answer to the extending
structures problem. However, computing the classifying object
${\mathcal H}^{2}_{A} \, (V, \, A)$, for a given JJ algebra
$A$ and a vector space $V$ is a highly non-trivial task. In
this section we shall explicitly compute ${\mathcal H}^{2}_{A} \,
(k, \, A)$.

\begin{definition} \delabel{flagex}
Let $A$ be a JJ algebra and $E$ a vector space containing
$A$ as a subspace. A JJ algebra structure on $E$ such that
$A$ is a subalgebra is called a \emph{flag extending structure} of
$A$ to $E$ if there exists a finite chain of subalgebras of $E$
\begin{equation} \eqlabel{lant}
E_0 := A  \subset E_1 \subset \cdots \subset E_m = E
\end{equation}
such that $E_i$ has codimension $1$ in $E_{i+1}$, for all $i = 0,
\cdots, m-1$. 
\end{definition}

All flag extending structures of $A$ to $E$ can be completely described and classified
by a recursive method. The key step of this process is the case $m = 1$ which
describes and classifies all unified products $A \,\natural \,
V_1$, for a $1$-dimensional vector space $V_1$. We can now continue the process by replacing
the initial JJ algebra $A$ with such a unified product $A
\,\natural \, V_1$. The latter product can be described in terms of $A$ only and iterating the process we obtain the description of all flag extending structures of $A$ to
$E$ after $m = {\rm dim}_k (V)$ steps. We start by introducing the following concept:

\begin{definition}\delabel{lambdaderivariii}
A \emph{flag datum} of a JJ algebra $A$ is a system $(D, \,
\lambda, \,  a_0, \, \alpha_0) \in {\rm End}_k (A) \times A^*
\times A \times k$ such that:
\begin{enumerate}
\item[(F1)] $\lambda ([a, \, b]) + 2 \lambda(a)\lambda(b) = 0 $; \\
\item[(F2)] $D([a, \, b]) = -[D(a),\, b] - [a, \, D(b)] - \lambda(a) D(b) - \lambda(b) D(a)$ ;\\
\item[(F3)] $[a, \, a_0] + \alpha_0 D(a) + 2 D^2 (a) + 2 \lambda(a) a_0 = 0$; \\
\item[(F4)] $3 \lambda(a) \alpha_0 + 2 \lambda(D(a)) = 0 $;\\
\item[(F5)] $3 D(a_0) + 3 \alpha_0 a_0  = 0$;\\
\item[(F6)] $3 \lambda(a_0) + 3 \alpha_0 ^2 = 0$.
\end{enumerate}
for all $a$, $b\in A$. The set of all flag data of $A$ will be
denoted by ${\mathcal F} (A)$.
\end{definition}

\begin{examples} \exlabel{abelflg}
1. If $D$ is an antiderivation of $A$ with $D^2 = 0$, then $(D, \,
\lambda: = 0, \,  a_0:= 0, \, \alpha_0 := 0)$ is a flag datum of
$A$.

2. Assume that $k$ is a field of characteristic $ \neq 2, 3$ and
let $A$ be the abelian JJ algebra, i.e. $[a, \, b] = 0$, for
all $a$, $b\in A$. Then the set ${\mathcal F} (A)$ of all flag
data of $A$ is in bijection with the set of all pairs $(D, \, a_0)
\in {\rm End}_k (A) \times A$, such that $D^2 = 0$ and $D(a_0) =
0$.
\end{examples}

We shall prove now that the set of all JJ extending
structures ${\mathcal J} {\mathcal J} \, (A, \, V)$ of a JJ
algebra $A$ through a $1$-dimensional vector space $V$ is
parameterized by ${\mathcal F} (A)$.

\begin{proposition}\prlabel{unifdim1}
Let $A$ be a JJ algebra and $V$ a vector space of dimension
$1$ with basis $\{x\}$. Then there exists a bijection between
the set ${\mathcal J} {\mathcal J} \, (A, \, V)$ of all JJ
extending structures of $A$ through $V$ and the set ${\mathcal F}
(A)$ of all flag data of $A$. Through the above bijection, the
JJ extending structure $\Omega(A, \, V)  =
\bigl(\triangleleft, \, \triangleright, f, \{-, -\} \bigl)$
corresponding to $(D, \, \lambda, \,  a_0, \, \alpha_0) \in
{\mathcal F} (A)$ is given for all $a \in A$ by:
\begin{equation}\eqlabel{extenddim1}
x \triangleleft a = \lambda (a) x, \quad x \triangleright a =
D(a), \quad f (x, x) = a_0, \quad \{x, \, x \} = \alpha_0 \, x
\end{equation}
\end{proposition}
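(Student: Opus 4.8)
The plan is to invoke \thref{1}: a JJ extending structure of $A$ through $V$ is precisely an extending datum satisfying (E1)--(E7), so it is enough to parametrise \emph{all} extending data of $A$ through a one-dimensional $V = kx$ and then translate the conditions. First I would note that an extending datum $(\triangleleft, \triangleright, f, \{-, \, -\})$ with $V = kx$ is uniquely determined by the four quantities $\lambda \in A^*$, $D \in \End_k(A)$, $a_0 \in A$, $\alpha_0 \in k$ prescribed by $x \triangleleft a = \lambda(a)\, x$, $x \triangleright a = D(a)$, $f(x,x) = a_0$, $\{x, \, x\} = \alpha_0\, x$, since a bilinear map out of, or into, a one-dimensional space is determined by its value(s) on the basis vector(s); conversely every $(D, \lambda, a_0, \alpha_0) \in \End_k(A) \times A^* \times A \times k$ arises this way. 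This already yields a bijection between the set of all extending data of $A$ through $kx$ and $\End_k(A) \times A^* \times A \times k$, implemented by formula \equref{extenddim1}. It remains to see that, under this bijection, the subset cut out by (E1)--(E7) corresponds to the subset ${\mathcal F}(A)$ cut out by (F1)--(F6).

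Next I would translate the seven conditions of \thref{1}. Condition (E1) is vacuous here: on a one-dimensional space every bilinear map is automatically symmetric. Each of (E2)--(E7) is a multilinear identity in its $V$-arguments (linear in each individual occurrence of an element of $V$), and because $\dim_k V = 1$ such an identity holds for all arguments in $V$ if and only if it holds when every $V$-argument is taken equal to the basis vector $x$. Performing these substitutions and using $x \triangleleft a = \lambda(a)\, x$, $x \triangleright a = D(a)$, $f(x,x) = a_0$, $\{x, \, x\} = \alpha_0\, x$ together with bilinearity, I expect the matching: (E2) becomes $\lambda([a,b]) + 2\lambda(a)\lambda(b) = 0$, which is (F1); (E3) becomes $D([a,b]) = -[D(a),b] - [a,D(b)] - \lambda(a)D(b) - \lambda(b)D(a)$, which is (F2); (E5) becomes $[a,a_0] + \alpha_0 D(a) + 2D^2(a) + 2\lambda(a)a_0 = 0$, which is (F3); (E4) becomes $3\lambda(a)\alpha_0 + 2\lambda(D(a)) = 0$, which is (F4); (E6) becomes $3D(a_0) + 3\alpha_0 a_0 = 0$, which is (F5); and (E7) becomes $3\lambda(a_0) + 3\alpha_0^2 = 0$, which is (F6). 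For the last two, one evaluates the circular sums at $x = y = z$, where each circular sum collapses to three times a single term — this is exactly where the factors of $3$ in (F5)--(F6) originate.

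Since each of these translations is a logical equivalence, the bijection onto $\End_k(A) \times A^* \times A \times k$ restricts to a bijection between ${\mathcal J}{\mathcal J}(A, V)$ and ${\mathcal F}(A)$, and by construction this restricted bijection is the one given by \equref{extenddim1}, which is the claim. The argument is almost entirely mechanical; the only point that needs a little care is verifying that each of (E2)--(E7) is genuinely linear in every single occurrence of a vector from $V$ — in particular that the circular sums in (E6)--(E7) define honest trilinear maps, so that evaluating on the basis vector loses no information — and keeping straight the bookkeeping of which (Ei) produces which (Fj). No conceptual obstacle arises.
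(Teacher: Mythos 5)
Your proposal is correct and follows the same route as the paper: identify extending data on a one-dimensional $V$ with quadruples $(D,\lambda,a_0,\alpha_0)$ via \equref{extenddim1}, then check that (E1)--(E7) translate into (F1)--(F6); the paper simply labels this translation ``a straightforward computation,'' whereas you carry it out explicitly, and your matching of conditions (E2)$\leftrightarrow$(F1), (E3)$\leftrightarrow$(F2), (E5)$\leftrightarrow$(F3), (E4)$\leftrightarrow$(F4), (E6)$\leftrightarrow$(F5), (E7)$\leftrightarrow$(F6), including the collapse of the circular sums to three equal terms, checks out.
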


\begin{proof}
Since $V := kx$ has dimension $1$, the set of all bilinear maps
$\triangleleft : V \times A \to V$, $\triangleright : V \times A
\to A$, $f: V\times V \to A$ and $\{-, \, -\} : V\times V \to V$
is in bijection with the set of all systems $(D, \, \lambda, \,
a_0, \, \alpha_0) \in {\rm End}_k (A) \times A^* \times A \times
k$ and the bijection is given such that \equref{extenddim1} hold.
The only thing left to prove is that the compatibility conditions
(E1)-(E7) from \thref{1} are equivalent to (F1)-(F6) from
\deref{lambdaderivariii}. This follows by a straightforward
computation.
\end{proof}

\begin{remark}\relabel{unifbicross}
Let $(D, \, \lambda, \,  a_0, \, \alpha_0) \in {\mathcal F} (A)$
be a flag datum of $A$. The unified product $A \,\natural \, kx$
associated to the JJ extending structure \equref{extenddim1}
will be denoted by $A_{(D, \, \lambda, \,  a_0, \, \alpha_0)}$ and
has the multiplication given for all $a$, $b\in A$ by:
\begin{equation}\eqlabel{extenddim2022}
[(a, x), \, (b, x)] = ([a, b] + D(a) + D(b) + a_0, \, (\lambda(a)
+ \lambda(b) + \alpha_0) x]
\end{equation}

Explicitly, if $\{e_i \, | \, i\in I\}$ is a $k$-basis of $A$ then
$A_{(D, \, \lambda, \, a_0, \, \alpha_0)}$ is the JJ algebra
having $\{x, \, e_i \, | \, i\in I\}$ as a $k$-basis and the
multiplication given by:
\begin{equation}\eqlabel{extenddim20}
[e_i, \, e_j] := [e_i, \, e_j]_A, \quad [e_i, \, x] := D(e_i) +
a_0 + \lambda(e_i) \, x, \quad [x, \, x] := a_0 + \alpha_0 \, x
\end{equation}
\end{remark}

Using \thref{classif} and \prref{unifdim1} we obtain:

\begin{corollary}\colabel{balam}
Let $A$ be a JJ algebra. Then a JJ algebra $E$
contains $A$ as a subalgebra of codimension $1$ if and only if
there exists $(D, \, \lambda, \,  a_0, \, \alpha_0) \in {\mathcal
F} (A)$ a flag datum of $A$ such that $E \cong A_{(D, \, \lambda,
\,  a_0, \, \alpha_0)}$.
\end{corollary}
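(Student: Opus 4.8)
The plan is to combine \thref{classif} with \prref{unifdim1}: the codimension-one hypothesis forces the complement of $A$ to be one-dimensional, after which \prref{unifdim1} translates the extending structure into a flag datum.

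For the ``only if'' direction I would start from a JJ algebra $E$ containing $A$ as a subalgebra with ${\rm dim}_k (E/A) = 1$ and invoke \thref{classif}: this produces a complement $V$ of $A$ in $E$ (necessarily of dimension $1$), a JJ extending structure $\Omega(A, \, V) = \bigl(\triangleleft, \, \triangleright, \, f, \{-, \, -\}\bigl)$ of $A$ through $V$, and an isomorphism of JJ algebras $E \cong A \,\natural\, V$ that stabilizes $A$. Choosing a basis $\{x\}$ of $V$, \prref{unifdim1} identifies $\Omega(A, \, V)$ with the extending structure \equref{extenddim1} attached to a uniquely determined flag datum $(D, \, \lambda, \, a_0, \, \alpha_0) \in {\mathcal F}(A)$; and by \reref{unifbicross} the associated unified product $A \,\natural\, V$ is, by definition, $A_{(D, \, \lambda, \, a_0, \, \alpha_0)}$. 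Hence $E \cong A_{(D, \, \lambda, \, a_0, \, \alpha_0)}$.

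For the converse I would take any flag datum $(D, \, \lambda, \, a_0, \, \alpha_0) \in {\mathcal F}(A)$ together with an isomorphism $E \cong A_{(D, \, \lambda, \, a_0, \, \alpha_0)}$. By \prref{unifdim1} (via \thref{1}) the object $A_{(D, \, \lambda, \, a_0, \, \alpha_0)} = A \,\natural\, kx$ is a genuine JJ algebra, the canonical inclusion $i_A : A \to A \,\natural\, kx$, $a \mapsto (a, \, 0)$, is an injective morphism of JJ algebras, and its image $A \times \{0\}$ is a subalgebra of codimension ${\rm dim}_k(kx) = 1$; transporting this along the isomorphism exhibits $A$ as a codimension-one subalgebra of $E$.

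I do not expect a genuine obstacle here: the substantive content is already carried out in \thref{classif} (every such extension is a unified product) and in \prref{unifdim1} (for a one-dimensional complement these unified products are parameterized by ${\mathcal F}(A)$). The only point needing a moment's care is that the generic bracket \equref{brackunif} of $A \,\natural\, V$ specializes, under the choices \equref{extenddim1}, to the explicit multiplication \equref{extenddim2022}--\equref{extenddim20} of $A_{(D, \, \lambda, \, a_0, \, \alpha_0)}$; but this is precisely the bookkeeping already recorded in \reref{unifbicross}, so it can be cited rather than redone.
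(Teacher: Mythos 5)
Your proposal is correct and follows exactly the route the paper takes: the corollary is derived directly from \thref{classif} (every codimension-one extension is a unified product $A \,\natural\, V$ with $\dim_k V = 1$) combined with \prref{unifdim1} (such unified products are precisely the $A_{(D, \, \lambda, \, a_0, \, \alpha_0)}$ for flag data), with the converse supplied by the canonical inclusion $i_A$. No further comment is needed.
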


Now, an easy computation shows that the equivalence
relation defined in \deref{echiaa} applied for the set ${\mathcal F} (A)$ via formulas \equref{extenddim1} of
\prref{unifdim1} takes the following form:

\begin{definition} \delabel{echivtwderivari}
Two flag data $(D, \, \lambda, \,  a_0, \, \alpha_0)$ and $(D', \,
\lambda', \,  a_0', \, \alpha_0') \in {\mathcal F} (A)$ of a
JJ algebra $A$ are called \emph{equivalent} and we denote
this by $(D, \, \lambda, \,  a_0, \, \alpha_0) \equiv (D', \,
\lambda', \,  a_0', \, \alpha_0')$ if $\lambda = \lambda'$ and
there exists a pair $(r, u) \in A \times k^*$ such that for all $a
\in A$ we have:
\begin{eqnarray}
D(a) &=& u \, D'(a) + [a, r] - \lambda'(a) \, r \eqlabel{lzeci3a} \\
\alpha_0 &=& u \, \alpha_0' + 2 \lambda'(r) \eqlabel{lzeci2b} \\
a_0 &=& u^2 \, a'_0 + [r, \, r] + 2 u \, D' (r) - u \, \alpha_0'
\, r - 2 \lambda'(r) \, r  \eqlabel{lzeci1c}
\end{eqnarray}
\end{definition}

Next, we classify all JJ algebras $A_{(D, \, \lambda, \,
a_0, \, \alpha_0)}$ by computing the cohomological type object
${\mathcal H}^{2}_{A} (V, \, A)$, where $V$ is a $1$-dimensional
vector space. This is the first explicit classification result of
the extending structures problem for JJ algebras and the key
step in the classification of all flag extending structures.

\begin{theorem}\thlabel{clasdim1}
Let $A$ be a JJ algebra of codimension $1$ in the vector
space $E$ and $V$ a complement of $A$ in $E$. Then, $\equiv$ is an
equivalence relation on the set ${\mathcal F} (A)$ of all flag
data of $A$ and
$$
{\rm Extd} \, (E, \, A) \cong {\mathcal H}^{2}_{A} (V, \, A)
\cong {\mathcal F} (A)/\equiv
$$
The bijection between ${\mathcal F} (A)/\equiv$ and ${\rm Extd} \,
(E, \, A)$ is given by:
$$
\overline{ (D, \, \lambda, \,  a_0, \, \alpha_0) } \mapsto A_{(D,
\, \lambda, \, a_0, \, \alpha_0)}
$$
where $\overline{ (D, \, \lambda, \,  a_0, \, \alpha_0) }$ is the
equivalence class of $(D, \, \lambda, \,  a_0, \, \alpha_0)$ via
the relation $\equiv$ from \deref{echivtwderivari} and $A_{(D, \,
\lambda, \, a_0, \, \alpha_0)}$ is the JJ algebra
constructed in \equref{extenddim20}.
\end{theorem}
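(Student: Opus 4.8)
The plan is to derive the statement from the machinery already developed. By \thref{main1}, $\equiv$ in the sense of \deref{echiaa} is an equivalence relation on ${\mathcal J}{\mathcal J}(A, \, V)$ and there is a bijection ${\mathcal H}^{2}_{A}(V, \, A) = {\mathcal J}{\mathcal J}(A, \, V)/\equiv \, \to {\rm Extd}(E, \, A)$ carrying the class of an extending structure to the class of its unified product. Since $\dim_k V = 1$, \prref{unifdim1} gives a bijection $\Phi : {\mathcal F}(A) \to {\mathcal J}{\mathcal J}(A, \, V)$ sending $(D, \, \lambda, \, a_0, \, \alpha_0)$ to the extending structure \equref{extenddim1}. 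I will transport $\equiv$ along $\Phi$ and show the resulting relation on ${\mathcal F}(A)$ is exactly the one in \deref{echivtwderivari}. A transport of an equivalence relation along a bijection is again an equivalence relation, which settles the first assertion; moreover $\Phi$ then descends to a bijection ${\mathcal F}(A)/\equiv \, \to {\mathcal J}{\mathcal J}(A, \, V)/\equiv$, and composing it with the bijection of \thref{main1} and using \reref{unifbicross} --- which identifies the unified product attached to \equref{extenddim1} with the JJ algebra $A_{(D, \, \lambda, \, a_0, \, \alpha_0)}$ of \equref{extenddim20} --- yields the displayed bijection ${\mathcal F}(A)/\equiv \, \to {\rm Extd}(E, \, A)$, $\overline{(D, \, \lambda, \, a_0, \, \alpha_0)} \mapsto A_{(D, \, \lambda, \, a_0, \, \alpha_0)}$.

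Everything therefore reduces to computing the transported relation, which is the (routine) computational core. Fix the basis $\{x\}$ of $V$. A pair $(r, v)$ as in \deref{echiaa}, with $r : V \to A$ linear and $v \in {\rm Aut}_k(V)$, is the same datum as a pair $(r, u) \in A \times k^{*}$, via $r := r(x) \in A$ and $v(x) = u x$ for a unique $u \in k^{*}$, so that $v^{-1}(x) = u^{-1} x$. Substituting $v^{-1}(x) = u^{-1} x$ together with $x \triangleleft a = \lambda(a) x$, $x \triangleright a = D(a)$, $f(x, x) = a_0$, $\{x, x\} = \alpha_0 x$ (and the analogous formulas for the primed structure) into the four defining formulas of \deref{echiaa} and comparing coefficients: the first formula yields $\lambda' = \lambda$; the second yields $u D'(a) = D(a) + \lambda(a) r - [a, r]$, i.e. \equref{lzeci3a}; the fourth yields $u \alpha_0' = \alpha_0 - 2 \lambda(r)$, i.e. \equref{lzeci2b}; and the third yields $u^{2} a_0' = a_0 + \alpha_0 r + [r, r] - 2 \lambda(r) r - 2 D(r)$. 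Into this last identity one substitutes $\alpha_0 = u \alpha_0' + 2 \lambda'(r)$, then $D(r) = u D'(r) + [r, r] - \lambda'(r) r$ (which is \equref{lzeci3a} evaluated at $a = r$), and uses $\lambda(r) = \lambda'(r)$; after cancellation it becomes precisely \equref{lzeci1c}. Reading these substitutions in reverse shows conversely that every $(r, u)$ satisfying \equref{lzeci3a}--\equref{lzeci1c} comes from an equivalence in the sense of \deref{echiaa}. Hence the transported relation is exactly $\equiv$ of \deref{echivtwderivari}.

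I expect no conceptual obstacle here; the only delicate point is the last identity, where a naive rearrangement of the third formula of \deref{echiaa} gives $a_0 = u^{2} a_0' - \alpha_0 r - [r, r] + 2 \lambda(r) r + 2 D(r)$, and one must carefully back-substitute the $\alpha_0$- and $D$-relations, tracking signs, to recognize it as $a_0 = u^{2} a_0' + [r, r] + 2 u D'(r) - u \alpha_0' r - 2 \lambda'(r) r$. Everything else is bookkeeping over the one-dimensional space $V$.
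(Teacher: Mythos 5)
Your proposal is correct and follows exactly the route the paper intends: \thref{main1} plus \prref{unifdim1} and \reref{unifbicross}, with the transported equivalence relation verified to coincide with that of \deref{echivtwderivari} (the paper merely asserts this as "an easy computation", which you carry out correctly, including the sign-sensitive back-substitution into the $f'$-formula). No gaps.
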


\begin{proof} Follows from \thref{main1} and the results of this section.
\end{proof}

\begin{remark}
In fact, the method described above can be used for classifying all finite dimensional JJ algebras over a field of characteristic $\neq 2$, $3$ or $5$. Indeed, using \cite[Corollary 1]{zel} we obtain that, over such a field, any JJ algebra is solvable and in particular is a flag extending structure of $\{0\}$. 
Thus, by starting the recursive method described above with $A := 0$ yields the description of all JJ algebras of a given finite dimension.
\end{remark} 

Next we provide two explicit examples for the above results by
computing ${\mathcal H}^{2}_{A} (V, \, A)$ and then describing all
JJ algebra structures which extend the JJ algebra
structure from $A$ to a vector space of dimension $1 + {\rm dim}_k
(A)$. The detailed computations are rather long but
straightforward and will be omitted.

\begin{example}
Let $k$ be a field of characteristic $\neq 2, 3$ and $A := k^n$,
the abelian JJ algebra of dimension $n$, i.e. $[a, \, b] = 0$, for all $a$,
$b\in k^n$. Then
$$
{\mathcal H}^{2}_{k^n} (k, \, k^n) \cong \{ (D, \, a_0) \in {\rm
End}_k (k^n) \times k^n \, | \, D^2 = 0, \,\, D(a_0) = 0 \}/\equiv
$$
where $(D, \, a_0) \equiv (D', \, a_0')$ if and only if there
exists a pair $(r, \, u) \in k^n \times k^*$ such that
$$
D(a) = u \, D'(a), \qquad a_0 = u^2 \, a_0' + 2u \, D'(r)
$$
for all $a\in k^n$. Indeed, using \exref{abelflg} the set of all
flag data ${\mathcal F} (k^n)$ identifies with the set $\{ (D,
\, a_0) \in {\rm End}_k (k^n) \times k^n \, | \, D^2 = 0, \,\,
D(a_0) = 0 \}$. The conclusion follows from \thref{clasdim1}.

Let $\{e_i \, | \, i = 1, \cdots, n \}$ be the canonical basis of
$k^n$ and $(D, \, a_0) \in {\mathcal F} (k^n)$. Then $k^n_{(D, \,
a_0)}$ is the $(n+1)$-dimensional JJ algebra with
multiplication given for all $i$, $j = 1, \cdots, n$:
$$
[e_i, \, e_j] := 0, \quad [e_i, \, e_{n+1}] = [e_{n+1}, \, e_{i}]
:= D(e_i), \quad [ e_{n+1}, \,  e_{n+1}] := a_0
$$
Any $(n+1)$-dimensional JJ algebra containing $k^n$ as an
abelian subalgebra is isomorphic to $k^n_{(D, \, a_0)}$.
\end{example}

\begin{example}
Let $k$ be a field of characteristic $\neq 2, 3$ and $A :=
\mathfrak{h} (3, k)$ the Heisenberg JJ algebra \cite{BF}
having $\{e_1, e_2, e_3 \}$ as a $k$-basis and the multiplication
defined by $[e_1, \, e_2] = [e_2, \, e_1] = e_3$.

Then, a straightforward computation shows that
the set of all flag data ${\mathcal F} (\mathfrak{h} (3, k))$ is in bijection with the set of triples $(\alpha,\, \beta,\, \gamma) \in k^{3}$
which satisfy $\alpha\, \gamma= 0$.

The bijection is defined such that the flag datum $(D, \, \lambda, \,  a_0, \, \alpha_0) \in
{\mathcal F} (\mathfrak{h} (3, k))$ corresponding to $(\alpha,\, \beta,\, \gamma)$ is given by:
\begin{eqnarray*}
&& \lambda \equiv 0,\,\,\, \alpha_{0} = 0,\,\,\, a_{0} = \alpha \, e_{2}\\
&& D(e_{1}) = \beta \, e_{3},\,\,\, D(e_{2}) = \gamma\, e_{3},\,\,\, D(e_{3}) = 0
\end{eqnarray*}
The compatibility condition $\alpha\, \gamma= 0$ imposes a
discussion on whether $\alpha = 0$ or $\alpha \neq 0$. If $\alpha = 0$, the corresponding flag datum defines the family of JJ algebras denoted by $\mathfrak{h} (3, k)_{(\beta,\, \gamma)}$ with the following multiplication:
\begin{eqnarray*}
[e_1, \, e_2] = [e_2, \, e_1]  = e_3,\,\, [e_1, \, x] = [x,\, e_1] = \beta\,e_3,\,\, [e_2, \, x] = [x,\, e_2] = \gamma\,e_3
\end{eqnarray*}
It can be easily seen that two flag datums induced by $(\beta,\, \gamma)$ and respectively $(\beta',\, \gamma')$ are
equivalent in the sense of \deref{echivtwderivari} if and only if there exists $u \in k^{*}$ such that $\beta \, \gamma = \beta' \, \gamma' u^{2}$. We denote by $\approx_1$ the equivalence relation on $k \times k$ defined as follows:
$$
(\beta,\, \gamma) \approx_1 (\beta',\, \gamma') \,\, {\rm if\,\, and\,\, only\,\,if\,\, there\,\, exists\,\,}\,\, u \in k^{*}\,\, {\rm such\,\, that} \,\, \beta \, \gamma = \beta' \, \gamma' u^{2}
$$

If $\alpha \neq 0$, the corresponding flag datum defines the family of JJ algebras denoted by $\mathfrak{h} (3, k)_{\beta}^{\alpha}$ with the following multiplication:
\begin{eqnarray*}
&& [e_1, \, e_2] = [e_2, \, e_1]  = e_3,\,\, [e_1, \, x] = [x,\, e_1] = \beta\,e_3 + \alpha\, e_{2},\\
&& [e_2, \, x] = [x,\, e_2] = \alpha\,e_3,\,\, [x,\, x] = \alpha\, e_{2}
\end{eqnarray*}
Now two flag datums induced by $(\alpha,\, \beta)$ and respectively $(\alpha',\,\beta')$ are equivalent in the sense of \deref{echivtwderivari} if and only if there exists $u \in k^{*}$ such that $\alpha = \alpha' u^{2}$. In particular, this shows that given $\beta \in k$ the flag datum induced by $(\alpha,\, \beta)$ is equivalent to the flag datum induced by $(\alpha,\, 0)$. We denote by $\approx_2$ the equivalence relation on $k$ defined as follows:
$$
\alpha \approx_2 \alpha' \,\, {\rm if\,\, and\,\, only\,\,if\,\, there\,\, exists\,\,}\,\, u \in k^{*}\,\, {\rm such\,\, that}\,\, \alpha = \alpha' u^{2}
$$

Furthermore, a flag datum induced by a triple with $\alpha = 0$ is never equivalent to a flag datum induced by a triple with $\alpha \neq 0$. This leads to the
description of ${\mathcal H}^{2}_{\mathfrak{h} (3, k)} \bigl(k, \, \mathfrak{h} (3, k)
\bigl)$ as the following coproduct of sets:
$$
{\mathcal H}^{2}_{\mathfrak{h} (3, k)} \bigl(k, \, \mathfrak{h} (3, k)
\bigl) \cong \bigl((k \times k)/ \approx_1 \bigl) \sqcup \bigl(k^{*}/ \approx_2\bigl)
$$
\end{example}

\textbf{Acknowledgement:} The authors are grateful to Ivan Shestakov for its valuable comments which led to a substantial improvement of the paper.

\end{document}